\title[TL categories with labelled regions and JW projectors]{Temperley--Lieb categories with coloured regions and Jones--Wenzl projectors}
\date{March 30, 2026}
\author{Cameron Howat}
\author{Robert Laugwitz}
\address{School of Mathematical Sciences,
University of Nottingham, Nottingham, NG7 2RD, United Kingdom}
\email{robert.laugwitz@nottingham.ac.uk}
\author{Martin Ray}
\email{pmymr5@nottingham.ac.uk}
\numberwithin{equation}{section}
\newtheorem{theorem}{Theorem}[section]
\newtheorem{proposition}[theorem]{Proposition}
\newtheorem{corollary}[theorem]{Corollary}
\newtheorem{lemma}[theorem]{Lemma}
\newtheorem{theorem*}{Theorem}
\theoremstyle{definition}
\newtheorem{definition}[theorem]{Definition}
\newtheorem{assumption}[theorem]{Assumption}
\theoremstyle{remark}
\newtheorem{example}[theorem]{Example}
\newtheorem{remark}[theorem]{Remark}
\newcommand{\cC}{\mathcal{C}}
\newcommand{\cD}{\mathcal{D}}
\newcommand{\cI}{\mathcal{I}}
\newcommand{\proj}[1]{\mathrm{proj}\text{-}{#1}}
\newcommand{\bi}{\mathbf{i}}
\newcommand{\bj}{\mathbf{j}}
\def\id{\mathrm{id}}
\newcommand{\NN}{\mathbb{N}}
\newcommand{\ZZ}{\mathbb{Z}}
\newcommand{\KK}{\Bbbk}
\newcommand{\Hom}{\mathrm{Hom}}
\newcommand{\TL}{\mathrm{TL}}
\newcommand{\End}{\mathrm{End}}
\newcommand{\Kar}{\mathrm{Kar}}
\newcommand{\Indec}{\mathrm{Indec}}
\newcommand{\tr}{\operatorname{tr}}
\newcommand{\one}{\mathbbm{1}}
\begin{document}
\include{DiagramCommands}


\begin{abstract}
Generalised Temperley--Lieb categories with regions labelled by elements of a commutative algebra were introduced by M.~Khovanov and the second author in [\emph{Pure Appl. Math. Q.} {\bf 19} (2023), no. 5]. We consider the case where the regions are labelled by colours, corresponding to a complete set of orthogonal idempotents of a semisimple commutative algebra. We determine when these generalised Temperley--Lieb categories are semisimple and find the direct sum decompositions of tensor products of simple objects. As the main tool we use two-variable versions of Chebychev polynomials and coloured Jones--Wenzl projectors. As a consequence, we prove a conjecture of M. Khovanov and the second author on Gram determinants and non-degeneracy of trace pairings for the associated Temperley--Lieb algebras with coloured regions. 
\end{abstract}
\subjclass[2020]{Primary 18M05; Secondary 16L60, 18M30}
\keywords{Monoidal categories, Temperley--Lieb categories, Temperley--Lieb algebras, Jones--Wenzl projectors, Chebychev polynomials}

\maketitle

\section{Introduction}
\label{sec:intro}

Temperley--Lieb categories $\TL(d)$ are tensor categories with several applications to knot theory and mathematical physics \cites{KauffBook,KaufLins,DG}. The basic objects of $\TL(d)$ are denoted by $[n]$, for $n$ a non-negative integer, and morphisms from $[n]$ to $[m]$ are $\KK$-linear combinations of diagrams of crossingless matchings of $n$ top and $m$ bottom points. The composition rule evaluates each closed circle to the scalar $d$ in the ground field $\KK$. The endomorphism algebras of the objects $[n]$ in these categories are the Temperley--Lieb algebras $\TL_n(d)$ that originate from statistical mechanics \cite{TL} and were rediscovered in operator algebra by Jones \cites{Jones1,Jones2}, and realised in a diagrammatic description by Kauffman \cite{Kauf}. The Temperley--Lieb categories can be used to study endomorphisms of the fundamental representation of the quantum group $U_q(\mathfrak{sl}_2)$ \cite{FK}.

This article concerns a generalisation of Temperley--Lieb categories $\TL(A,\omega)$ that appeared in \cite{KL}*{Section~2.4}. Here, $A$ is a commutative algebra over a field $\KK$ and $\omega\colon A\to A$ is a $\KK$-linear map called the \emph{wrapping map}. Each region of a crossingless matching is labelled by an element of $A$. When composing, a closed circle with an interior label is evaluated by applying $\omega$ to the label. These Temperley--Lieb categories with labelled regions are again monoidal categories. 

In this article, we focus on the case of the semisimple algebra $A=\KK^\ell$, where it suffices to label each region by one of a complete set of $\ell$ distinct orthogonal idempotents. We think of these labels as \emph{colours} of the regions.
Here, $\omega=(\omega_{ij})$ corresponds to an $\ell\times \ell$-matrix and when composing two crossingless matching diagrams, the result is zero if regions with distinct colours meet.  The subcategory of $\TL(\KK^n,\omega)$ where colours in neighbouring regions are distinct already appeared in \cite{EL}, and in \cites{Eli,Haz} in the two-coloured case, in the study of Soergel bimodules.
We prove two results concerning these TL categories $\TL(\KK^n,\omega)$ with coloured regions.
\begin{enumerate}
    \item \Cref{thm: TL is semisimple} proves that $\TL(\KK^\ell, \omega)$ is semisimple if and only if none of the pairs of matrix entries $\omega_{ij},\omega_{ji}$ are zeros of certain two-variable Chebychev polynomials $U_n(x,y)$ that we introduce in \Cref{subsec:JW-recursion}. The isomorphism classes of simple objects are indexed by sequences $\bi=(i_1,\ldots, i_{n+1})\in \{1,\ldots,\ell\}^{n+1}$, for any non-negative integer $n\geq 0$.
    \item \Cref{thm:tensor-dec} determines the tensor product decomposition of the simple objects in the semisimple case. Namely, for colour sequences $\bi = (i_1, \dots, i_{n+1})$ and $\bj = (j_1, \dots, j_{m+1})$, let $r$ be the number of labels at the start of $\bj$ that match with labels at the end of $\bi$, so $j_1 = i_{n+1}$, $j_2 = i_n$, $\dots$, $j_r = i_{n+2-r}$ and $j_{r+1}\neq i_{n+1-r}$. If $r=0$ then $T_\bi\otimes T_\bj \cong 0$, and if $r>0$, then 
    \begin{equation*}
    T_\bi\otimes T_\bj = \bigoplus_{k = 0}^{r-1} T_{\bi \cap^k \bj},
    \end{equation*}
    for 
    $\bi \cap^k\bj = (i_1, \dots, i_{n+1-k}, j_{k+2}, \dots, j_{m+1}) = (i_1, \dots, i_{n-k}, j_{k+1}, \dots, j_{m+1})$ being the label sequence obtained by concatenating $\bi$ and $\bj$ and removing $2k+1$ overlapping entries.
\end{enumerate}
These results generalise well-known results for the classical (unlabelled) Temperley--Lieb categories
\begin{enumerate}
    \item That $\TL(d)$ is semisimple if and only if $d\neq q+q^{-1}$ for a root of unity $q$, i.e., when $d$ is not the root of a Chebychev polynomial of the second kind. 
    \item The tensor product decomposition, also called the Clebsch--Gordan rule, 
    $$ T_n\otimes T_m = T_{n+m}\oplus T_{n+m-2}\oplus \ldots \oplus T_{|n-m|}$$
    of the simple objects $\{ T_n~|~\text{for $n\geq 0$}\}$ in $\TL(d)$ in the semisimple case.
\end{enumerate}
A key tool in proofs are coloured Jones--Wenzl projectors whose images correspond to the simple objects of $\TL(\KK^\ell,\omega)$. These Jones--Wenzl projectors already appeared in \cite{EL}*{Section 2.6}, and \cites{Eli,Haz} in the two-colour case. A special case of the tensor product decompositions appeared in \cite{EL}*{Corollary 2.18}.

As an application, we show in \Cref{cor:conjecture23} that the Gram determinants of natural pairings on the generalised Temperley--Lieb algebras $\TL(\KK^\ell,\omega)$ are given by products of the two-variable Chebychev polynomials, evaluated at pairs $(\omega_{ij}, \omega_{ji})$ and we identify when these are non-zero. This result is used to prove \cite{KL}*{Conjecture 23}. As outlined in \Cref{sec:Gram-meander}, these Gram determinants have an interpretation as meander determinants with coloured regions, generalising the work of \cites{DFGG,DiFrancesco}.

We now summarise the structure of this article. We start by introducing the technical setup of Krull--Schmidt categories and techniques of filtrations on such categories in \Cref{sec:background}. We define the Temperley--Lieb categories with coloured regions and find their indecomposable objects in \Cref{sec:TL-labelled}. The results concerning semisimplicity and Jones--Wenzl projectors are contained in \Cref{sec:semisimple} and those on tensor product decompositions are contained in \Cref{sec:tensorproduct}. \Cref{app: chebychev} contains proofs of some elementary properties of the two-variable Chebychev polynomials $U_n(x,y)$ that may be of independent interest.

\subsection*{Acknowledgements}

This research was funded by a Nottingham Research Fellowship held by R.~L. This funding supported research internships for both of the other authors.  M.~R. is further supported by an EPSRC Postgraduate Studentship (Reference Number 2926264). R.~L. thanks Mikhail Khovanov and Emily Peters for interesting conversations that provided ideas used in this article. 

\section{Background}
\label{sec:background}

In this section, we collect relevant background material used in the main sections. 

\subsection{Krull--Schmidt categories}
\label{sec:KS}

Throughout this article, we say a category $\cC$ is \emph{additive} if it is enriched over the category of abelian groups and has finite direct sums. We further let $\KK$ denote an arbitrary field. We say that  category $\cC$ is \emph{$\KK$-linear} if $\cC$ is enriched over $\KK$-vector spaces and additive. The \emph{additive closure} of a category is denoted by $\cC^\oplus$. A category $\cC$ that is enriched over $\KK$-vector spaces is called \emph{hom-finite} if, for any two objects $X,Y$ in $\cC$, $\Hom_\cC(X,Y)$ is finite-dimensional. 

The \emph{idempotent completion} of an additive category $\cC$ is denoted by $\cC^\circ$. Objects in $\cC^\circ$ are pairs $(X,e)$ where $X$ is an object of $\cC$ and $e\in \End_\cC(X)$ is an idempotent endomorphism. A morphism $g\colon (X,e)\to (Y,f)$ in  $\cC^\circ$ corresponds to a morphism $g\colon X\to Y$ in $\cC$ such that $g=fge$. We define the \emph{Karoubian envelope} of $\cC$ to be the category $\Kar(\cC):= (\cC^\oplus)^\circ$. One shows that $\Kar(\cC)$ is both additive and idempotent complete. It is $\KK$-linear provided that $\cC$ is $\KK$-linear.

We say that an object $X$ of an additive category $\cC$ is called a \emph{Karoubi generator} of $\cC$ if $\cC = \Kar(X)$, where $\Kar(X)$ denotes the Karoubian envelope of the full subcategory of $\cC$ on the object $X$. 
By \cite{Krause}*{Proposition~2.3}, if we have a Karoubi generator $X$ of an additive and idempotent complete category $\cC$, so $\cC = \Kar(X)$, there is an equivalence of categories between $\cC$ and the category $\proj{\End_\cC(X)}$ of finitely generated projective right $\End_\cC(X)$-modules.

An additive category $\cC$ is \emph{Krull--Schmidt (KS)} if every object decomposes into a finite direct sum of objects which each have a local endomorphism ring. In a KS category, an object is indecomposable (i.e., does not decompose as a direct sum of non-zero objects) if and only if its endomorphism ring is local. It was shown in \cite{Krause}*{Section 4} that decompositions of object as a direct sum of indecomposable objects in a KS category are unique up to reordering and isomorphism of summands.  The following proposition gives an easy way to determine when a hom-finite category is Krull--Schmidt.

\begin{proposition}[\cite{FLP}*{Lemma 4.1}]\label[proposition]{prop:FLP1}
    Let $\cC$ be enriched over $\KK$-vector spaces and hom-finite. Then $\cC$ is Krull--Schmidt if and only if it is additive and idempotent complete. In particular, $\Kar(\cC)$ is Krull--Schmidt.
\end{proposition}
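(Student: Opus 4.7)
The plan is to prove the two directions of the equivalence separately, with hom-finiteness doing the heavy lifting via the structure theory of finite-dimensional $\KK$-algebras. For the forward direction, additivity is built into the definition of a Krull--Schmidt category, so only idempotent completeness requires work. Given an idempotent $e\in\End_\cC(X)$, I would decompose $X=X_1\oplus\cdots\oplus X_n$ into summands whose endomorphism rings are local, and write $e$ as an $n\times n$ matrix of morphisms. Because the only idempotents in a local ring are $0$ and $1$, a standard matrix-reduction argument (iterating row and column operations by units) diagonalises $e$ up to isomorphism of the ambient decomposition, from which one reads off a splitting $e=ip$, $pi=\id$.

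For the backward direction, fix $X\in\cC$ and view $R:=\End_\cC(X)$ as a finite-dimensional $\KK$-algebra. Such an algebra is Artinian, hence semiperfect, so it admits a complete set $1=e_1+\cdots+e_n$ of primitive orthogonal idempotents. By idempotent completeness each $e_i$ splits off a summand $X_i$, yielding $X\cong X_1\oplus\cdots\oplus X_n$ with $\End_\cC(X_i)\cong e_iRe_i$. Since $e_i$ is primitive, the corner algebra $e_iRe_i$ has no nontrivial idempotents. The key step, and the place I expect the argument to require the most care, is to conclude that a finite-dimensional $\KK$-algebra with no nontrivial idempotents is local: being Artinian, its Jacobson radical is nilpotent, so the quotient is semisimple; a semisimple $\KK$-algebra with no nontrivial idempotents is a division ring, and lifting back shows every element of $e_iRe_i$ is either a unit or nilpotent, giving locality. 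This produces the required Krull--Schmidt decomposition.

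For the concluding ``in particular'' clause, observe that $\Kar(\cC)=(\cC^\oplus)^\circ$ is additive and idempotent complete by construction. Hom-spaces in $\Kar(\cC)$ are subspaces of hom-spaces in $\cC^\oplus$, which are finite direct sums of hom-spaces in $\cC$, hence finite-dimensional. Invoking the equivalence just proved, $\Kar(\cC)$ is Krull--Schmidt. The only conceptual obstacle is the local-ring lemma above; everything else is bookkeeping around how idempotents in $\End_\cC(X)$ correspond to direct summands once idempotents split.
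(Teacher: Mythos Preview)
The paper does not prove this proposition; it is quoted verbatim from \cite{FLP}*{Lemma 4.1} as a black box, so there is no in-paper argument to compare against. Your sketch is essentially a correct reconstruction of the standard proof.

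A few remarks on the details. For the backward direction and the ``in particular'' clause, your argument is clean and complete: finite-dimensionality of $\End_\cC(X)$ gives semiperfectness, primitive idempotents split by hypothesis, and the corner algebras $e_iRe_i$ are local by exactly the Jacobson-radical argument you describe. For the forward direction, the ``matrix diagonalisation'' step is the right intuition but deserves a sharper statement: what you are really using is that if $X=\bigoplus X_i$ with each $\End_\cC(X_i)$ local, then $\End_\cC(X)$ is a semiperfect ring, and in any semiperfect ring an idempotent $e$ is conjugate by a unit to a partial sum of the canonical primitive idempotents (the projections onto the $X_i$). Since those projections visibly split, so does $e$. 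This is cleaner than ad hoc row and column operations and does not actually require hom-finiteness---the implication ``Krull--Schmidt $\Rightarrow$ idempotent complete'' holds in general (see Krause). Hom-finiteness is only needed for the converse, to force every endomorphism ring to be semiperfect.
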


\subsection{Quotients and filtrations}
\label{sec:KS-filtrations}

We will use filtrations on KS categories to split them up into categories which are easier to understand. Following \cite{FLP}*{Section 4.3}, we say that a filtration of a KS category is an ascending chain $$\cC_0\subseteq \cC_1 \subseteq \dots \subseteq \cC$$ of full KS subcategories such that $\bigcup_{n\geq 0}\cC_n = \cC$. We also define $\cC_{-1} = \{0\}$ when it is needed.

For a full subcategory $\cD$ of a $\KK$-linear category $\cC$ we can define the quotient category $\cC/\cD$ as $\cC/\cI_\cD$ where $\cI_\cD$ is the ideal of $\cC$ spanned by $\{\id_X ~|~ X\in \cD\}$. We denote by $\pi:\cC \to \cC/\cD$ projection functor to the quotient, which is an additive $\KK$-linear functor.

\begin{lemma}[\cite{FLP}*{Lemma 4.3}]\label[lemma]{lem:indecomposables-in-quotient}
For a KS category $\cC$, let $\cI$ be an ideal of $\cC$ and let $\pi: \cC \to \cC/\cI$. Then $\cC/\cI$ is a Krull–Schmidt category. Additionally if $X$ is an indecomposable object in $\cC$ then either $\pi(X)\cong 0$ or $\pi(X)$ is indecomposable in $\cC/\cI$, and all indecomposables in $\cC/\cI$ are of this form. Finally if $X$ and $Y$ are indecomposable in $\cC$ and $\pi(X)\cong \pi(Y) \not\cong 0$ then $X\cong Y$.
\end{lemma}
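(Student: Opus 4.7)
The plan is to exploit that the kernel of the quotient map $\End_\cC(X)\to\End_{\cC/\cI}(\pi(X))$ is precisely $\cI(X,X)$, so the endomorphism ring of $\pi(X)$ is the quotient $\End_\cC(X)/\cI(X,X)$. Since any quotient of a local ring is either the zero ring or again local, this gives direct control over the indecomposability of $\pi(X)$ whenever $X$ is indecomposable in $\cC$.

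First, I would note that $\cC/\cI$ inherits an additive structure from $\cC$ in the standard way: finite direct sums descend to the quotient, and $\pi$ sends $\id_X$ to $\id_{\pi(X)}$. To see that $\cC/\cI$ is Krull--Schmidt, I pick any object $X$ in $\cC$ and decompose $X\cong X_1\oplus\cdots\oplus X_n$ into indecomposables using that $\cC$ is KS. Applying $\pi$ yields $\pi(X)\cong\pi(X_1)\oplus\cdots\oplus\pi(X_n)$; after discarding the zero summands, what remains is a finite direct sum of objects whose endomorphism rings are non-zero quotients of local rings, hence local. This simultaneously proves that $\cC/\cI$ is Krull--Schmidt and that every indecomposable of $\cC/\cI$ is, up to isomorphism, of the form $\pi(X_i)$ for some indecomposable $X_i$ of $\cC$, invoking uniqueness of KS decompositions in $\cC/\cI$ to match summands.

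For the final assertion, the plan is to lift a mutually inverse pair $f\colon\pi(X)\to\pi(Y)$, $g\colon\pi(Y)\to\pi(X)$ to morphisms $\tilde f\colon X\to Y$ and $\tilde g\colon Y\to X$ in $\cC$, and then argue that $\tilde g\tilde f$ is automatically an automorphism of $X$. Since $\pi(\tilde g\tilde f)=\id_{\pi(X)}$ is non-zero, the ideal $\cI(X,X)$ is proper in the local ring $\End_\cC(X)$ and must therefore lie inside its unique maximal ideal $\mathfrak m$. If $\tilde g\tilde f$ also lay in $\mathfrak m$, its image in the local ring $\End_\cC(X)/\cI(X,X)$ would land in the maximal ideal $\mathfrak m/\cI(X,X)$, contradicting the fact that this image equals the unit $\id_{\pi(X)}$. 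Hence $\tilde g\tilde f$ is a unit in $\End_\cC(X)$, the map $\tilde f$ is a split monomorphism, and $X$ becomes a non-zero summand of the indecomposable $Y$, forcing $X\cong Y$.

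The main obstacle is this final lifting step: one has an isomorphism only in the quotient together with lifts of its components, and one must carefully use the locality of $\End_\cC(X)$ to promote $\tilde g\tilde f$ from a morphism whose image is the identity to a genuine automorphism in $\cC$. Once this is handled, the remaining assertions reduce to bookkeeping with the KS decomposition produced in the previous step.
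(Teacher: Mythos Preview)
The paper does not supply its own proof of this lemma; it is quoted verbatim from \cite{FLP}*{Lemma 4.3} and used as a black box. Your argument is the standard one and is correct: the identification $\End_{\cC/\cI}(\pi(X))\cong\End_\cC(X)/\cI(X,X)$ together with the fact that a proper quotient of a local ring is local handles the first two assertions, and your lifting argument for the last assertion is sound, since $\tilde g\tilde f-\id_X\in\cI(X,X)\subseteq\mathfrak m$ forces $\tilde g\tilde f$ to be a unit. There is nothing to compare against in the paper itself.
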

A consequence of this lemma is that for any Karoubi generator $X$ of $\cC$, $\pi(X)$ is a Karoubi generator of the quotient. 

Now, \cite{FLP}*{Corollary 4.5, Lemma 4.13} shows that if $\cC$ is a KS category and $\cD$ is a full KS subcategory then 
\begin{equation}\label{eq:indec-quot}
\Indec(\cC) \cong \Indec(\cC/\cD) \sqcup \Indec(\cD),
\end{equation}
where $\cong$ denotes a bijection of sets and $\Indec(\cC)$ is the set of indecomposable objects of $\cC$. Applying this to a filtration $\cC_n$ of a KS category $\cC$ shows that
\begin{equation}\label{eq:indec}
\Indec(\cC)\cong \bigsqcup_{n\geq 0}\Indec(\cC_{n}/\cC_{n-1}).
\end{equation}

\subsection{Semisimple categories}

We say that a $\KK$-linear and hom-finite KS category $\cC$ is \emph{semisimple} if the endomorphism ring of each object is a semisimple algebra. We will require the following two lemmas concerning semisimple categories.

\begin{lemma}\label[lemma]{lem: object semisimple implies semisimple}
    If a $\KK$-linear hom-finite KS category $\cC$ has a set of disjoint linear simple objects $\{ X_i\}_{i\in I}$, meaning $\Hom_\cC(X_i, X_j) \cong \delta_{ij}\KK$, and every object of $\cC$ is isomorphic to a finite direct sum of copies of objects $X_i$, then $\cC$ is semisimple.
    \begin{proof}
        Let $Y$ be an object in $\cC$, so $Y\cong Y_1^{\oplus a_1} \oplus \dots \oplus Y_n^{\oplus a_n}$ for $n$ distinct simple objects $Y_1, \dots, Y_n \in \{ X_i\}_{i\in I}$ and $a_1, \dots, a_n \geq 1$. Endomorphisms of such an object are $a\times a$-matrices of morphisms $Y_i\to Y_j$, for $a=\sum_{i=1}^n a_i$. By assumption, all entries correspond to scalars and are zero for non-isomorphic $Y_i$'s. Thus, $\End_\cC(Y)\cong M_{a_1 \times a_1}(\KK) \oplus \dots \oplus M_{a_n \times a_n}(\KK)$, which by the Artin--Wedderburn theorem (\cite{assem}*{Theorem 3.4}) is a semisimple algebra, so $\cC$ is semisimple.
    \end{proof}
\end{lemma}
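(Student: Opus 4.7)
The plan is to unpack what ``semisimple'' means for a KS category and reduce the problem to a direct computation of each endomorphism algebra. By the definition given in the previous subsection, it suffices to show that $\End_\cC(Y)$ is a semisimple $\KK$-algebra for every object $Y$ of $\cC$.

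First, I would pick an arbitrary object $Y$ and invoke the hypothesis to write $Y \cong X_{i_1}^{\oplus a_1} \oplus \cdots \oplus X_{i_n}^{\oplus a_n}$, where $X_{i_1}, \dots, X_{i_n}$ are pairwise distinct members of $\{X_i\}_{i \in I}$ and each multiplicity $a_k$ is a positive integer. Next, using the standard calculus of morphisms between finite direct sums in an additive category, I would present an endomorphism of $Y$ as a block matrix whose $(k,\ell)$-block is an $a_k \times a_\ell$ matrix with entries in $\Hom_\cC(X_{i_\ell}, X_{i_k})$, with composition corresponding to matrix multiplication.

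The key step is then to apply the disjointness assumption $\Hom_\cC(X_i, X_j) \cong \delta_{ij}\KK$: all off-diagonal blocks vanish, and each diagonal block is an honest $a_k \times a_k$ matrix with entries in $\KK$. Consequently, the composition law forces
\begin{equation*}
\End_\cC(Y) \;\cong\; M_{a_1 \times a_1}(\KK) \oplus \cdots \oplus M_{a_n \times a_n}(\KK)
\end{equation*}
as $\KK$-algebras. By the Artin--Wedderburn theorem, each matrix algebra $M_{a_k \times a_k}(\KK)$ is semisimple, and a finite direct sum of semisimple algebras is again semisimple, which yields the conclusion.

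I do not anticipate any serious obstacle: the only potential pitfall is checking that the additive decomposition of $Y$ is compatible with composition in the way claimed, but this is the standard description of morphism spaces in additive categories and follows from the universal property of finite biproducts. The argument relies essentially on $\cC$ being hom-finite (so that each $a_k$ is a well-defined finite multiplicity and the matrix algebras are finite-dimensional) and KS (so that a direct sum decomposition into the given simples exists and is well behaved).
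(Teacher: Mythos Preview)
Your proposal is correct and follows essentially the same route as the paper: decompose an arbitrary object into distinct simples with multiplicities, describe endomorphisms as block matrices, use the disjointness hypothesis to kill the off-diagonal blocks, and conclude via Artin--Wedderburn that $\End_\cC(Y)\cong \bigoplus_k M_{a_k\times a_k}(\KK)$ is semisimple. The paper's argument is slightly terser but otherwise identical in content.
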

\begin{lemma}\label[lemma]{lem: no central nilpotent in semisimple algebra}
    A semisimple finite-dimensional $\KK$-algebra contains no non-zero central nilpotent elements.
    \begin{proof}
        By the Artin--Wedderburn theorem a semisimple finite-dimensional $\KK$-al\-ge\-bra is isomorphic to a finite direct product of matrix rings $M_1 \times \dots \times M_n$ with entries in division rings. Let $x = (x_1, \dots, x_n)$ be a central nilpotent element in $M_1 \times \dots \times M_n$, so $x\neq 0$ and there exists $k\in \NN$ such that $x^k=0$. Then each of the components $x_k$, $1\leq k\leq n$ is a central nilpotent element in the matrix ring $M_k$ and hence zero. Thus, $x=0$. 
    \end{proof}
\end{lemma}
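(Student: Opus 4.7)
The plan is to reduce the statement to the structure theorem for semisimple finite-dimensional algebras. By the Artin--Wedderburn theorem (\cite{assem}*{Theorem 3.4}), any such $\KK$-algebra $A$ decomposes as a finite direct product $A\cong M_{d_1}(D_1)\times\cdots\times M_{d_n}(D_n)$, where each $D_k$ is a division ring over $\KK$ and $M_{d_k}(D_k)$ is the ring of $d_k\times d_k$ matrices over $D_k$. This is the same reduction used in the preceding lemma, so the approach fits the style of the paper.

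The key step is to compute the center of this product. Since taking centers commutes with finite direct products and since $Z(M_d(D))$ consists precisely of the scalar matrices with entries in $Z(D)$, one gets $Z(A)\cong Z(D_1)\times\cdots\times Z(D_n)$, a finite product of fields (each $Z(D_k)$ being the center of a division ring). A nilpotent element in a product of fields must have every component nilpotent, but the only nilpotent element of a field is zero; hence any nilpotent element of $Z(A)$ is zero. Since every central nilpotent element of $A$ lies in $Z(A)$, the conclusion follows.

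I do not expect any real obstacle here: the only thing to be careful about is the identification $Z(M_d(D)) \cong Z(D)$ rather than $Z(M_d(D))\cong D$, but this is standard. An alternative, essentially equivalent route would avoid Artin--Wedderburn by observing that if $x\in Z(A)$ satisfies $x^m=0$, then $xA$ is a two-sided ideal (by centrality) with $(xA)^m\subseteq x^m A=0$, so $xA$ is a nilpotent ideal contained in the Jacobson radical, which is zero for a semisimple algebra; hence $x=0$. The Artin--Wedderburn proof feels more in line with the preceding lemma, so that is the route I would take.
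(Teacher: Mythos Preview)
Your proposal is correct and follows essentially the same approach as the paper: both reduce via Artin--Wedderburn to a product of matrix rings over division rings and then observe that the central nilpotent components must vanish. Your version is slightly more explicit in identifying $Z(M_d(D))\cong Z(D)$ as a field, whereas the paper simply asserts that a central nilpotent in each matrix factor is zero, but the argument is the same.
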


\section{Temperley--Lieb categories with labelled regions}
\label{sec:TL-labelled}

In this section, we construct Temperley--Lieb (TL) categories with labelled regions following \cite{KL}*{Sections~2.4 and 3.2}. 
We fix the input datum of a pair $(A,\omega)$, where $A$ is a commutative $\KK$-algebra $A$ and  $\omega: A \to A$ is a $\Bbbk$-linear map called the \emph{wrapping map}.

\subsection{Definition of labelled Temperley--Lieb categories}
\label{sec:TL-definition}

    We first construct the category $\TL^0(A, \omega)$ which has:
    \begin{itemize}
        \item     Objects $[n]$, for each non-negative integer $ n \in \mathbb{Z}_{\geq 0}$.
        \item The morphisms between $[n]$ and $[m]$ in this category are formal $\KK$-linear combinations of labelled crossingless matchings with $n$ points at the top and $m$ points at the bottom. 
    \end{itemize}
 These labelled crossingless matchings have each of their regions labelled by an element of $A$ and the linear combinations are $\KK$-multilinear in each region, for example if $a,b,c,d,e \in A$ and $\lambda \in \KK$, then
    \begin{equation}
    \vcenter{\hbox{\TLMultilinear}}
    \end{equation}
    Composition of these morphisms is done by vertically stacking the diagrams. Wherever any two regions meet in this stacking the labels are multiplied together as elements of $A$, and wherever a closed loop appears we apply the wrapping map $\omega$ to the interior label. An example of such a composition is given, diagrammatically, as follows:
    \begin{equation}
    \vcenter{\hbox{\resizebox{0.9\textwidth}{!}{\TLMultiply}}}
    \end{equation}
The multilinearity of labels means that for any chosen $\KK$-basis of $A$, any linear combination of labelled diagrams can be rearranged to a linear combination of diagrams only labelled by basis vectors. 

For any $n$ and $m$ there are only finitely many crossingless matchings with $n$ points at the top and $m$ points at the bottom, so if $A$ is finite-dimensional, then $\TL^0(A, \omega)([n],[m])$ is finite dimensional and $\TL^0(A, \omega)$ is hom-finite and we can take the Karoubian envelope to get a $\KK$-linear category.\footnote{In \cite{KL}, these categories are denoted by $\mathbb{SU}_{\omega}$ for $\TL^0(A,\omega)$ and $\mathbb{DSU}_{A,\omega}$ for its Karoubian envelope $\TL(A,\omega)$.}  

\begin{definition}\label[definition]{def:TL}
We define the \emph{Temperley--Lieb (TL) category with labelled regions} $\TL(A,\omega)$ to be the Karoubian envelope of $\TL^0(A,\omega)$.
\end{definition}

By \Cref{prop:FLP1} we have that $\TL(A,\omega)$ is Krull--Schmidt since $\TL^0(A, \omega)$ is hom-finite. We can view $\TL^0(A,\omega)$ as a subcategory of $\TL(A,\omega)$ by identifying $[n] \in \TL^0(A,\omega)$ with $([n], 1_{[n]}) \in \TL(A,\omega)$. The morphism $1_{[n]}$, the identity of $[n]$ in $\TL^0(A,\omega)$, is simply the identity diagram with each region labelled by $1_A$, the identity of the algebra $A$. Contained in the category $\TL(A,\omega)$ are analogues of TL algebras with labelled regions.

\begin{definition}
    For $n\geq 0$, the \emph{Temperley--Lieb (TL) algebra with labelled regions}  is the algebra $\TL_n(A,\omega)$ of endomorphisms of the object $[n]$ in the category $\TL(A, \omega)$.
\end{definition}

Setting $A=\KK$, $\omega$ is given by multiplication with a scalar $d$. In this case, $\TL(\KK,d)$ recovers the classical TL category and $\TL_n(\KK,d)$ the classical TL algebra, see \cites{Jones1,Jones2,Kauf}.

\subsection{The case of a semisimple algebra of labels}
\label{sec:TL-semisimple-algebra}

We will now focus on the specific case of the semisimple $\KK$-algebra $A = \KK^\ell$ spanned over $\KK$ by the standard basis vectors $e_1, \dots, e_\ell$. Note that the $e_i$ are the primitive idempotents and $1_A=e_1+\ldots +e_\ell$. Because of multilinearity we can just use these $e_1, \dots, e_\ell$ for the labels instead of using all the elements of $A$, without any loss of generality. The wrapping map $\omega \colon \KK^\ell \to \KK^\ell$ can be represented by the $\ell\times \ell$ matrix $(\omega_{ij})$ such that 
\begin{equation}\label{eq:omega-matrix-expression}
\omega(e_j)=\sum_{i=1}^\ell \omega_{ji}e_i.
\end{equation}
With this algebra $A=\Bbbk^\ell$, we interpret the labels of the regions by the orthogonal idempotents as $\ell$ distinct \emph{colours} and will refer to the generalised TL category $\TL(\KK^\ell,\omega)$ as having \emph{coloured regions} in the following. For the colours here we will just write them as $1$, $2,\ldots$ rather than $e_1$, $e_2,\ldots$ , or $i_1$, $i_2,\ldots$ rather than $e_{i_1}$, $e_{i_2},\ldots$, for simplicity. The composition rule implies that when two regions with distinct colours meet, the result is zero. For example, 
\begin{equation}
   \vcenter{\hbox{\resizebox{!}{5\baselineskip}{\ColouredExampleA }}} \circ \vcenter{\hbox{\resizebox{!}{5\baselineskip}{\ColouredExampleB }}} = \vcenter{\hbox{\resizebox{!}{8\baselineskip}{\ColouredExampleC }}}=0.
\end{equation}
In the following, we will only use colours in explicit examples, leaving general diagrams in black and white. 
\begin{remark}\label[remark]{rem:EL-version}
    This category $\TL(\KK^\ell, \omega)$ is similar to the 2-category constructed in \cite{EL}*{Section 2.6} called $S\mathcal{TL}(\omega)$, the difference being that in $\TL(\KK^\ell, \omega)$ we allow diagrams to have consecutive regions labelled by the same colour.
\end{remark}

For a sequence of colours $\bi = (i_1, \dots, i_{n+1})$ of length $n+1$, where $i_1,i_2,\ldots\in \{1,\ldots, \ell\}$, we denote
\begin{equation}
1_{\bi}:=\vcenter{\hbox{\MatchingOnes}}.
\end{equation}
We note that $1_{\bi}$ is an idempotent of $[n]$ in $\TL^0(\KK^\ell, \omega)$, so $$[\bi]:=([n], 1_{\bi})$$ is an object in $\TL(\KK^\ell,\omega)$. Let $\bj = (j_1,\dots, j_{m+1})$, then 
$$\Hom_{\TL(\KK^\ell,\omega)}([\bi],[\bj]) = 1_{\bj}\Hom_{\TL^0(\KK^\ell,\omega)}([n],[m])1_{\bi}$$
is $\{0\}$ unless $n+m$ is even, $i_1 = j_1$ and $i_{n+1} = j_{m+1}$. If these three conditions all hold then $\Hom_{\TL(\KK^\ell,\omega)}([\bi],[\bj])$ is spanned by formal linear combinations of crossingless matchings with colour sequence $\bi$ along the top, and $\bj$ along the bottom. 

The endomorphisms $\End_{\TL(\KK^\ell,\omega)}([\bi])$ again form a $\KK$-algebra, which we call the generalised TL algebra $\TL_{\bi}(\KK^\ell, \omega)$. Matchings of the shape 
\begin{equation}    h_k^\bi:=\vcenter{\hbox{\resizebox{0.75\textwidth}{!}{\TLGeneratorh}}}
\label{eq:hkbi}
\end{equation}
for $1\leq k \leq n-1$ such that $i_k=i_{k+2}$ can be composed to obtain any other crossingless matching, up to scalar, (cf. \cite{DFGG}*{Section 3} in the classical case). Since all matchings in $\TL_{\bi}(\KK^\ell, \omega)$ have the same colour sequence $\bi$ at the top and bottom these diagrams $h_k^\bi$ for $1\leq k \leq n-1$ must generate $\TL_{\bi}(\KK^\ell, \omega)$.

\subsection{Filtrations on coloured Temperely--Lieb categories}
\label{sec:filtration}

Next we introduce a filtration on the category $\cC = \TL(\KK^\ell,\omega)$. We define $\cC_n$ to be the Karoubian envelope of the full subcategory $\{[\bi] ~|~ \bi \text{ has length }\leq n+1\}$. 

For any $n\in \ZZ_{\geq 0}$,  the object $[n]=([n],1_{[n]})$  in $\TL(\KK^\ell,\omega)$ is isomorphic to the direct sum of all the objects $[\bi]$ where $\bi$ has length $n+1$. Thus, $\cC_n = \Kar(\{[k]~|~0 \leq k\leq n\})$ and $\bigoplus_{k = 0}^n [k]$ is a Karoubi generator of $\cC_n$. Note that $\cC_{-1}$ is defined as $0$. Every object $([n],e)$ in $\TL(\KK^\ell,\omega)$ will be in the $n$th part of the filtration for some $n$. We can therefore find the indecomposable objects in $\cC=\TL(\KK^\ell,\omega)$ by finding the indecomposable objects in each $\cC_n/\cC_{n-1}$ as explained in \Cref{sec:KS-filtrations}.

Applying that the quotient functor preserves Karoubi generators and \cite{Krause}*{Proposition~2.3} we see that 
$$\cC_n/\cC_{n-1} = \Kar(\pi(\textstyle\bigoplus_{k = 0}^n [k]))\simeq \proj{\End_{\cC_n/\cC_{n-1}}(\pi(\textstyle\bigoplus_{k = 0}^n [k]))},$$where $\pi$ is the projection map from $\cC_n$ to $\cC_n/\cC_{n-1}$, so we now look to simplify this algebra $\End_{\cC_n/\cC_{n-1}}(\pi(\bigoplus_{k = 0}^n [k]))$. First we can see that if an endomorphism factors through an object in $X \in \cC_{n-1}$ then this endomorphism in $0$ in $\cC_n/\cC_{n-1}$ because $\id_X$ is $0$. If we consider the two-sided ideal $\cI_{n-1}$ in $\TL_n(A,\omega)$ defined by  
$$\cI_{n-1}(X,Y):=\left\{f:X \to Y ~|~ f\text{ factors through objects in }\cC_{n-1}\right\},$$ then
\begin{align*}
    \End_{\cC_n/\cC_{n-1}}(\pi(\textstyle\bigoplus_{k = 0}^n [k]))&=\faktor{\textstyle\End_{\cC_n}(\bigoplus_{k = 0}^n [k])}{\cI_{n-1}(\textstyle\bigoplus_{k = 0}^n [k],\textstyle\bigoplus_{k = 0}^n [k])}\\
    &\cong\faktor{\End_{\cC_n}([n])}{\cI_{n-1}([n],[n])}.
\end{align*}
The  isomorphism of algebras $\cong$ follows from the observation that $\id_{[k]}$, with $k<n$, is contained in $\cI_{n-1}$ and hence zero in the quotient.

We can now use that $[n] = \bigoplus_{\bi \in I}[\bi]$ to simplify the algebra further. In ${\End_{\cC}([\bi])}$, every diagram except the identity diagram will factor through an object in $\cC_{n-1}$. For example, for the sequence $\bi = (1,2,1,2,1)$:
\begin{align}\label{eqn: non identity factor}
&\vcenter{\hbox{\resizebox{!}{4\baselineskip}{\TLNonIdentityFactorA}}}\,\,\, = \vcenter{\hbox{\resizebox{!}{4\baselineskip}{\TLNonIdentityFactorB}}} \circ \vcenter{\hbox{\resizebox{!}{4\baselineskip}{\TLNonIdentityFactorC}}}\\
  &\hspace{34pt}[\bi]\to[\bi]\hspace{70pt} [\bi]\to[(1,2,1)]\hspace{48pt} [(1,2,1)]\to[\bi]\nonumber
\end{align}
For any two distinct colour sequences $\bi$ and $\bj$ of length $n+1$, $\Hom_{\cC_n/\cC_{n-1}}([\bi],[\bj]) = 0$ because any diagram except the identity factors through an object in $\cC_{n-1}$, and in the identity diagram the colours will not all match because $[\bi]\neq[\bj]$. Therefore,  \begin{align*}
    \faktor{\End_{\cC}([n])}{\cI_{n-1}([n],[n])} &\cong \faktor{\End_{\cC}(\bigoplus_{\bi \in I}[\bi])}{\cI_{n-1}([n],[n])}\\
    &\cong \faktor{\left(\prod_{\bi \in I}\End_\cC([\bi])\right)}{\cI_{n-1}([n],[n])}\\
    &\cong \prod_{\bi \in I}\left(\faktor{\End_\cC([\bi])}{\cI_{n-1}([\bi],[\bi])}\right)=\KK^{|I|} = \KK^{\ell^{\,n+1}}.
\end{align*}
This simplification leads to the following result.
\begin{proposition}\label[proposition]{prop: filtration quotient is K^l^n+1 mod}
The quotient category $\cC_n/\cC_{n-1}$ is equivalent to the semisimple category  of finite-dimensional modules over $\KK^{\ell^{\,n+1}}$.
\begin{proof}
As $\End_{\cC_n/\cC_{n-1}}(\pi(\textstyle\bigoplus_{k = 0}^n [k]))\cong \KK^{\ell^{\,n+1}}$, we have $\cC_n/\cC_{n-1} \simeq \proj{\KK^{\ell^{\,n+1}}}$. Also $\KK^{\ell^{\,n+1}}$ is semisimple so all modules over this algebra are projective, and therefore $\cC_n/\cC_{n-1}$ is the category of finitely generated modules (and equivalently, finite-dimensional modules) over $\KK^{\ell^{\,n+1}}$, which is semisimple.
\end{proof}
\end{proposition}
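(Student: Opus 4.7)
The plan is essentially to stitch together the preceding computations. All of the substantive work has been done in the paragraphs before the proposition: we already know that $\bigoplus_{k=0}^n [k]$ is a Karoubi generator of $\cC_n$, and we have identified the endomorphism algebra of its image under the quotient functor $\pi$ as $\KK^{\ell^{n+1}}$ by collapsing the ideal $\cI_{n-1}$ of morphisms factoring through $\cC_{n-1}$.

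First I would invoke the consequence of \Cref{lem:indecomposables-in-quotient} noted in the text, namely that $\pi\bigl(\bigoplus_{k=0}^n [k]\bigr)$ is a Karoubi generator of $\cC_n/\cC_{n-1}$. Combined with the Krull--Schmidt property inherited from $\cC_n$ (again via \Cref{lem:indecomposables-in-quotient}), this lets me apply \cite{Krause}*{Proposition 2.3} to conclude that $\cC_n/\cC_{n-1}$ is equivalent to the category $\proj{R}$ of finitely generated projective right modules over $R := \End_{\cC_n/\cC_{n-1}}\bigl(\pi(\bigoplus_{k=0}^n [k])\bigr)$.

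Next I would cite the chain of algebra isomorphisms just established, which yields $R \cong \KK^{\ell^{n+1}}$. The final observation is that $\KK^{\ell^{n+1}}$ is a semisimple $\KK$-algebra, so every finitely generated module over it is projective, and finitely generated modules coincide with finite-dimensional modules. Hence $\proj{\KK^{\ell^{n+1}}}$ is the full category of finite-dimensional $\KK^{\ell^{n+1}}$-modules, which is semisimple.

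There is no genuine obstacle: the proof is simply a bookkeeping step assembling the preparatory computations. If anything required care, it would be verifying that the quotient functor $\pi$ really does send the Karoubi generator to a Karoubi generator (so that Krause's equivalence applies to the quotient), but this was already remarked upon as a direct consequence of \Cref{lem:indecomposables-in-quotient}.
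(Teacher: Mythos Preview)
Your proposal is correct and follows exactly the same route as the paper: use that $\pi(\bigoplus_{k=0}^n[k])$ is a Karoubi generator of the quotient, apply Krause's equivalence to identify $\cC_n/\cC_{n-1}\simeq\proj{\KK^{\ell^{n+1}}}$, and then note that semisimplicity of $\KK^{\ell^{n+1}}$ collapses projective modules to all finite-dimensional modules. If anything, your write-up is slightly more explicit about why Krause's result applies to the quotient, but the argument is identical.
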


We have seen above that $\pi([\bi])$ is a simple object in $\cC_n/\cC_{n-1}$ for any colour sequence $\bi$ of length $n+1$. Moreover, all the $\ell^{n+1}$ non-isomorphic simple objects in $\cC_n/\cC_{n-1}$ are of this form.

\begin{proposition}\label[proposition]{prop:bijection of indecomposables}
    For every $\bi$ of length $n+1$, there exists a unique indecomposable object $X_\bi \in \cC_n$ such that $X_\bi \notin \cC_{n-1}$ and $\pi(X_\bi) = \pi([\bi])$.
    \begin{proof}

        By \Cref{lem:indecomposables-in-quotient}, every indecomposable object $\pi([\bi])$ in $\cC_n/\cC_{n-1}$ is of the form $\pi(X_\bi)$ for some $X_\bi\in \Indec(\cC_n)$, and $X_\bi$ cannot be in $\cC_{n-1}$ otherwise we would have $\pi(X_\bi)\cong 0$, so $\pi(X_\bi)\in\Indec(\cC_n) \setminus \Indec(\cC_{n-1})$. Also, we have that $$\Indec(\cC_n) \setminus \Indec(\cC_{n-1})\cong \Indec(\cC_n/\cC_{n-1}) = \{\pi([\bi]) ~|~ \bi \text{ has length }n+1\}.$$ Finally if $X_\bi = X_\bj$ then $\pi([\bi]) = \pi(X_\bi)= \pi(X_\bj) = \pi([\bj])$ and $\bi = \bj$, so the assignment $\bi \mapsto X_\bi$ is a bijection between the set of colour sequences of length $n+1$ and  $\Indec(\cC_n) \setminus \Indec(\cC_{n-1})$.
    \end{proof}
\end{proposition}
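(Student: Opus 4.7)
The plan is to deduce this proposition directly from the filtration machinery set up in Section~\ref{sec:KS-filtrations} together with the explicit description of the quotient obtained in \Cref{prop: filtration quotient is K^l^n+1 mod}. The key observation is that we have already identified $\{\pi([\bi]) \mid \bi \text{ of length } n+1\}$ as a complete set of pairwise non-isomorphic simple (hence indecomposable) objects of $\cC_n/\cC_{n-1}$, since this quotient is equivalent to finite-dimensional modules over $\KK^{\ell^{n+1}}$.

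First, I would invoke \Cref{lem:indecomposables-in-quotient} applied to the ideal $\cI_{n-1}$ of $\cC_n$ whose quotient is $\cC_n/\cC_{n-1}$. The lemma provides, for each indecomposable $Y$ of $\cC_n/\cC_{n-1}$, a lift to an indecomposable $X$ of $\cC_n$ with $\pi(X)\cong Y$; moreover this lift is unique up to isomorphism. Combining this with \eqref{eq:indec-quot}, which identifies $\Indec(\cC_n)\setminus \Indec(\cC_{n-1})$ bijectively with $\Indec(\cC_n/\cC_{n-1})$, yields at once: for every $\bi$ of length $n+1$, there is an indecomposable $X_\bi\in\cC_n$, not already in $\cC_{n-1}$, such that $\pi(X_\bi)\cong \pi([\bi])$, and $X_\bi$ is determined up to isomorphism by this property.

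For uniqueness, if $X_\bi$ and $X_\bj$ are two such indecomposables with $\pi(X_\bi)\cong \pi([\bi])$ and $\pi(X_\bj)\cong \pi([\bj])$, then $X_\bi \cong X_\bj$ forces $\pi([\bi])\cong \pi([\bj])$ in $\cC_n/\cC_{n-1}$, which by the pairwise non-isomorphism of the simple modules over $\KK^{\ell^{n+1}}$ forces $\bi=\bj$. This shows the assignment $\bi\mapsto X_\bi$ is a well-defined injection from colour sequences of length $n+1$ into $\Indec(\cC_n)\setminus\Indec(\cC_{n-1})$, and \eqref{eq:indec-quot} together with \Cref{prop: filtration quotient is K^l^n+1 mod} ensures it is also surjective.

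There is no real obstacle here since all the necessary abstract machinery (Krull--Schmidt structure, filtrations, the lifting lemma, and the explicit semisimple description of the filtration quotient) has been developed in Sections~\ref{sec:KS}--\ref{sec:filtration}; the proof is essentially a bookkeeping exercise that packages these inputs. The only point to be careful about is to phrase the conclusion so that $X_\bi$ is characterised by belonging to $\cC_n\setminus \cC_{n-1}$ together with the isomorphism $\pi(X_\bi)\cong \pi([\bi])$, which \Cref{lem:indecomposables-in-quotient} guarantees.
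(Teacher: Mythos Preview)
Your proposal is correct and follows essentially the same approach as the paper: both invoke \Cref{lem:indecomposables-in-quotient} to lift indecomposables from the quotient, use the bijection \eqref{eq:indec-quot} together with the identification of $\Indec(\cC_n/\cC_{n-1})$ with the $\pi([\bi])$ from \Cref{prop: filtration quotient is K^l^n+1 mod}, and argue uniqueness by noting that $X_\bi\cong X_\bj$ forces $\pi([\bi])\cong\pi([\bj])$ and hence $\bi=\bj$.
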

\begin{corollary}\label[corollary]{cor: indec of C are Xis}
    The indecomposable objects in $\cC =\TL(\KK^\ell, \omega)$ are the $X_\bi$, for all colour sequences $\bi$ of length $\geq 0$.
    \begin{proof}
        This follows from the fact that $\cC = \bigcup_{n\geq 0}C_n$, see \eqref{eq:indec}, and at each step of the filtration $\Indec(\cC_n) \setminus \Indec(\cC_{n-1}) = \{X_\bi ~|~ \bi\text{ has length }n+1\}$.
    \end{proof}
\end{corollary}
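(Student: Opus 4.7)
The plan is to piece together the bijection in \Cref{prop:bijection of indecomposables} with the general filtration formula \eqref{eq:indec}. Since $\cC = \TL(\KK^\ell,\omega) = \bigcup_{n\geq 0}\cC_n$ is the union of the ascending filtration defined in \Cref{sec:filtration}, and each $\cC_n$ is a full KS subcategory of $\cC$, every indecomposable object of $\cC$ lies in some $\cC_n$. Hence, as noted in \eqref{eq:indec},
\[
\Indec(\cC) \;\cong\; \bigsqcup_{n\geq 0}\Indec(\cC_n/\cC_{n-1}),
\]
with the convention $\cC_{-1}=\{0\}$.

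Next, I would invoke \Cref{prop:bijection of indecomposables}, which sets up a bijection between colour sequences $\bi$ of length $n+1$ and the set $\Indec(\cC_n)\setminus\Indec(\cC_{n-1})$ via $\bi\mapsto X_\bi$. Combined with the identification $\Indec(\cC_n)\setminus\Indec(\cC_{n-1}) \cong \Indec(\cC_n/\cC_{n-1})$ used in the proof of that proposition, each summand in the disjoint union above is parameterised by the colour sequences of the corresponding length.

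Taking the disjoint union over all $n\geq 0$ therefore produces a bijection between $\Indec(\cC)$ and the set of all finite colour sequences $\bi$ of length $\geq 1$ (together with the length-$0$ case corresponding to $n=-1$ trivially or, more accurately, to the empty tensor/unit object arising from $[0]$, which is indecomposable and represented by a single colour label). I expect the only subtle bookkeeping to be making sure the length conventions line up correctly: the object $[n]$ has $n+1$ regions (hence $n+1$ colour labels), and the indecomposable summands of $[n]$ lying in $\cC_n$ but not $\cC_{n-1}$ are exactly the $X_\bi$ with $\bi$ of length $n+1$, so the parameter set is exhausted as $n$ runs over $\ZZ_{\geq 0}$. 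With this verified, uniqueness of the decomposition in a KS category ensures that no additional indecomposables appear, completing the proof.
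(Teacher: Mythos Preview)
Your proposal is correct and follows essentially the same approach as the paper: invoke the filtration decomposition \eqref{eq:indec} and then use \Cref{prop:bijection of indecomposables} to identify each layer $\Indec(\cC_n)\setminus\Indec(\cC_{n-1})$ with the colour sequences of length $n+1$. Your digression about a ``length-$0$ case'' is unnecessary and slightly muddled---the smallest case is $n=0$, where $[0]$ has a single region and the sequences already have length $1$---but this does not affect the argument.
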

In \Cref{sec:semisimple}, we will investigate when the objects $X_\bi$ are simple in order to prove semisimplicity of $\TL(\KK^\ell,\omega)$. Before this, we review the monoidal structure of this category.

\subsection{Monoidal structure}

The category $\TL(\KK^\ell,\omega)$ a strict monoidal category\footnote{This is a special case of the more general class of monoidal categories $\mathbb{SU}_\omega$ of \cite{KL}*{Section 2.4}.}. The monoidal product is defined by horizontal stacking of morphisms. 
\begin{definition}
On $\TL^0(\KK^\ell,\omega)$ we define $[n]\otimes [m] = [n+m]$, and if $f:[n]\to[m]$ and $g:[p]\to[q]$ then $f\otimes g:[n+p]\to [m+q]$ is formed by stacking the diagrams horizontally and extending this bilinearly. The tensor product extends to $\cC = \TL(\KK^\ell,\omega)$ by defining $([n],e)\otimes([n'],e') = ([n+n'], e\otimes e')$.

For example, consider the following tensor product of morphisms:
\begin{equation*}
   \vcenter{\hbox{\resizebox{!}{5\baselineskip}{\TLMonoidalProductA }}} \otimes \vcenter{\hbox{\resizebox{!}{5\baselineskip}{\TLMonoidalProductB }}} = \vcenter{\hbox{\resizebox{!}{5\baselineskip}{\TLMonoidalProductC }}}
\end{equation*}
We see that $f\otimes g$ is only non-zero if the last colour of $f$ is the same as the first colour of $g$. In this example, both colours are $2$ so the tensor product is non-zero. 
\end{definition}

The monoidal unit $\one$ of $\TL(\KK^\ell,\omega)$ is the object $[0]$ and admits a decomposition
\begin{equation}\label{eq:unit-dec}
\one = [(1)]\oplus \ldots \oplus [(\ell)].
\end{equation}

\begin{definition}\label[definition]{def:concat}
For $\bi = (i_1, \dots, i_{n+1})$ and $\bj = (j_1, \dots, j_{m+1})$, if $i_{n+1} = j_1$ then 
    we denote the \emph{concatenation} of $\bi$ and $\bj$ by
    \begin{equation*}
    \bi\bj := (i_1, \dots, i_{n+1}, j_2, \dots, j_{m+1}).
    \end{equation*}
\end{definition}
With this notation, we observe that 
$[\bi]\otimes [\bj]=[\bi\bj]$ provided that the last entry of the sequence $\bi$ matches the first entry of the sequence $\bj$.

\section{Semisimplicity of Temperley--Lieb categories with coloured regions}
\label{sec:semisimple}

This section contains the first main result on semisimplicity of the categories $\TL(A, \omega)$ for the semisimple algebras $A=\KK^\ell$ from \Cref{sec:TL-semisimple-algebra}.

\subsection{Classical Jones--Wenzl projectors}
    We will first review the Jones--Wenzl (JW) projectors for the classical (unlabelled) TL algebras $\TL_n(d)=\TL_n(\KK,d)$, as defined in \cite{Wen}, see also \cites{FK,Mor}. These are non-zero idempotents in $\TL_n(d)$, denoted $f_n$, which are uniquely defined for each $n\geq 1$. 

Here, $U_n(x)$ denotes the $n$th Chebychev polynomial of the second kind.\footnote{Note that these Chebychev polynomials of the second kind are often defined with a different normalisation which matches the one used here substituting $2x$ for $x$, cf. \cite{mason-handscome}*{Section 1.2.2}.} These polynomials are defined recursively by setting $U_{0}(x)=1$, $U_{1}(x)=x$ and $U_{n}(x)=xU_{n-1}(x) - U_{n-2}(x)$.

\begin{definition}\label[definition]{def:classical-JW}
Assume that $d$ is not a zero of any of the Chebychev polynomials $U_n(x)$.
The \emph{Jones--Wenzl (JW) projectors} $f_n\in \TL_n(d)$ are defined recursively by setting $f_0=1_{[0]}$, $f_{1}=1_{[1]}$, and, for $n\geq 1$
\begin{equation}
f_n \; = \;  \resizebox{!}{3\baselineskip}{$\vcenter{\hbox{\MatchingBoxT}}$}
\;-\; \frac{U_{n-2}(d)}{U_{n-1}(d)}    \resizebox{!}{3\baselineskip}{$\vcenter{\hbox{\MatchingBoxL}}$}\quad.
\end{equation}
\end{definition}

The JW projectors $f_n$ produced from this recurrence are  idempotents. They also have the property that adding a `cup' or `cap' to any JW projector gives zero, as described in \cite{Mor}. This is equivalent to multiplying on the left or right by $h_k$, the unlabelled version of the generators $h_k^\bi$, giving zero. \vspace{-5pt}
\begin{equation}
\vcenter{\hbox{\resizebox{0.7\textwidth}{!}{\MatchingBoxes}}}
\end{equation}

\subsection{Definition of the coloured Jones--Wenzl projectors}\label{subsec:JW-recursion}

We will now construct JW projectors for the coloured TL algebras $\TL_{\bi}(\KK^\ell, \omega)$ defined in \Cref{sec:TL-semisimple-algebra}. These coloured JW projectors will also be recursively defined non-zero idempotentes that have the same properties of being annihilated by adding `caps' and `cups', i.e., by multiplying with the generators $h_k^\bi$ of \eqref{eq:hkbi}, as the classical JW projectors. 

To define the coloured JW projectors we will use the following Chebychev polynomials.

\begin{definition}\label[definition]{def:two-var-Cheb}
We define \emph{two-variable Chebychev polynomials} $U_n(x,y)\in \ZZ[x,y]$, for $n\geq 0$, recursively by 
\begin{align}
    U_{0}(x,y)&=1, \\
    U_{1}(x,y)&=y,\\
    U_n(x,y)&=\begin{cases} xU_{n-1}(x,y) - U_{n-2}(x,y), &\text{if $n$ is even}, \\
    yU_{n-1}(x,y) - U_{n-2}(x,y), &\text{if $n$ is odd}.
    \end{cases}
\end{align} 
\end{definition}

\begin{example} Using the recursion, we compute the next entries in the series:
\begin{align*}
     U_{2}(x,y)&=xy-1,& U_{5}(x,y)&=x^2y^3-4xy^2+3y,\\
U_{3}(x,y)&=xy^2-2y,&  U_{6}(x,y)&=x^3y^3-5x^2y^2+6xy-1,\\
U_{4}(x,y)&=x^2y^2-3xy+1, & U_{7}(x,y)&=x^3y^4-6x^2y^3+10xy^2-4y.
\end{align*}
\end{example}

Setting $x=y$ recovers the classical Chebychev polynomials of the second kind which were used in \Cref{def:classical-JW}. Some properties of these two-variable Chebychev polynomials are discussed in \Cref{app: chebychev}.

\begin{remark}
    These two-variable Chebychev polynomials are related to the \emph{two-coloured quantum numbers} of \cites{EL}. We see that $[n]_{s,t} = (-1)^{n-1}U_{n-1}(a_{ts},a_{st})$ and $[n]_{t,s} = (-1)^{n-1}U_{n-1}(a_{st},a_{ts})$. See also \cites{Haz}. 
\end{remark}

In the following, we will define coloured JW projectors for all sequences $\bi$ of colours. We start by constructing JW projectors for alternating sequences $\bi=(i,j,\ldots)$ of length $n+1$ in the following definition before giving the general construction. Recall that the wrapping map $\omega$ is determined by the matrix $(\omega_{ij})$, see \eqref{eq:omega-matrix-expression}.

\begin{definition}\label[definition]{def:JW-proj}
We first define $f_0^{ij}=1_i$ and $f_1^{ij} = 1_{ij}$. Assuming that $f_{n-1}^{ij}$ has been defined and that $U_{n-1}(\omega_{ij},\omega_{ji}) \neq 0$, the \emph{coloured Jones--Wenzl (JW) projector} $f_n^{ij}$ on the object $\bi = (i,j,\dots)$ is defined by the formulas  
\begin{align}
f_n^{ij}\; &= \; \vcenter{\hbox{\resizebox{!}{5\baselineskip}{\MatchingBoxLOT }}} - \;\frac{U_{n-2}(\omega_{ij}, \omega_{ji})}{U_{n-1}(\omega_{ij}, \omega_{ji})}  \vcenter{\hbox{\resizebox{!}{5\baselineskip}{\MatchingBoxLOL }}}, \quad \text{when $n$ is odd,} \\
f_n^{ij}\; &=\; \vcenter{\hbox{\resizebox{!}{5\baselineskip}{\MatchingBoxLET }}} - \;\frac{U_{n-2}(\omega_{ij}, \omega_{ji})}{U_{n-1}(\omega_{ij}, \omega_{ji})}  \vcenter{\hbox{\resizebox{!}{5\baselineskip}{\MatchingBoxLEL }}}, \quad \text{when $n$ is even.}
 \end{align}
\end{definition}
In order to ensure that all JW projectors $f_n^{ij}$ exist, we will make the following assumption about the wrapping map $\omega$.

\begin{assumption}\label[assumption]{assumption:chebychev-non-zero}
We assume that the wrapping map $\omega: \KK^\ell \to \KK^\ell$, represented by the matrix $(\omega_{ij})$, satisfies $$U_n(\omega_{ij}, \omega_{ji})\neq 0$$ for all $1\leq i,j \leq \ell$ and all $n\geq 1$.   
\end{assumption}
\begin{remark}
    \Cref{assumption:chebychev-non-zero} is equivalent to the assumption in \cite{EL}*{Section 2.6} that, for all $k\geq 1$ and $m\geq 0$, the two-coloured quantum binomial coefficients $\big[{k\atop m}\big]_{s,t}=\frac{[1]_{s,t}[2]_{s,t}\ldots[k]_{s,t}}{[1]_{s,t}\ldots[m]_{s,t}[1]_{s,t}\ldots[k-m]_{s,t}}$, where $[n]_{s,t} = (-1)^{n-1}U_{n-1}(\omega_{ts},\omega_{st})$, are well-defined (after cancellation) and invertible. See also \cite{Haz} in the two-colour case with coefficients in a commutative ring.  
\end{remark}

\begin{example} Using the formulas, we can compute the JW projectors $f_2^{ij}$ and $f_3^{ij}$:
\begin{align}
f_2^{ij}\; &= \; \vcenter{\hbox{\resizebox{!}{3.25\baselineskip}{\ProjectorTwoA }}} - \;\frac{ 1}{\omega_{ji}}   \vcenter{\hbox{\resizebox{!}{4\baselineskip}{\ProjectorTwoB }}}\\
f_3^{ij}\; &=\; \vcenter{\hbox{\resizebox{!}{3.25\baselineskip}{\ProjectorThreeA }}} - \;\frac{\omega_{ji}}{\omega_{ij} \omega_{ji}-1}  \vcenter{\hbox{\resizebox{!}{3.5\baselineskip}{\ProjectorThreeB }}} -\frac{\omega_{ij}}{\omega_{ij} \omega_{ji}-1}  \vcenter{\hbox{\resizebox{!}{3.5\baselineskip}{\ProjectorThreeC }}}\\
 &+ \;\frac{ 1}{\omega_{ij}\omega_{ji}-1}   \vcenter{\hbox{\resizebox{!}{3.5\baselineskip}{\ProjectorThreeD}}}+ \;\frac{ 1}{\omega_{ij}\omega_{ji}-1}   \vcenter{\hbox{\resizebox{!}{3.5\baselineskip}{\ProjectorThreeE}}}\notag
 \end{align}
\end{example}

We now show that the $f_n^{ij}$ of \Cref{def:JW-proj} have analoguous properties to the classical JW projectors.

\begin{lemma}\label[lemma]{lem: coeff of identity diagram is 1}
The coefficient of $1_\bi$ in coloured JW projector $f_n^{ij}$ is $1$.
    \begin{proof}
This follows from a straightforward induction, observing that the second summand in the recursion is a morphism contained in $\cC_{n-2}$.
    \end{proof}
\end{lemma}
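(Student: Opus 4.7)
The plan is a straightforward induction on $n$. The base cases $n=0$ and $n=1$ hold immediately from the definition, since $f_0^{ij} = 1_{(i)}$ and $f_1^{ij} = 1_{(i,j)}$ are themselves the identity diagrams of the associated objects, and so the coefficient of the identity matching is $1$ by inspection.

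For the inductive step, I would analyse the two summands appearing in the recursion of \Cref{def:JW-proj} separately. The first summand is obtained by placing $f_{n-1}^{ij}$ inside a box with one additional straight strand attached on the right. Expanding this inner box in the basis of crossingless matchings and applying the inductive hypothesis, the identity diagram $1_{\bi'}$ of the truncated sequence $\bi' = (i_1, \dots, i_n)$ appears with coefficient $1$; concatenating it with the extra straight strand yields the identity matching $1_\bi$ with coefficient $1$. No other basis diagram in this expansion can produce the identity, since appending a strand to a non-identity planar matching on $[\bi']$ cannot cancel any of its caps or cups.

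For the second summand, the crucial observation is that the underlying diagram contains a cap-and-cup pair \emph{outside} the two $f_{n-1}^{ij}$ boxes (joining the rightmost strand back into the matching). When the two enclosed boxes are expanded into the basis of crossingless matchings, this external cap-cup pair is untouched, so every resulting basis diagram still contains a cap and therefore is not equal to $1_\bi$. Consequently the second summand contributes $0$ to the coefficient of $1_\bi$, and the total coefficient is $1 - 0 = 1$, completing the induction. There is no genuine obstacle here; the only point requiring care is recognising that the geometric cap-cup in the second summand persists under any expansion of the inner projectors, which is immediate from planarity of crossingless matchings.
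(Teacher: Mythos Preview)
Your proposal is correct and follows essentially the same approach as the paper: both argue by induction, with the key point being that the second summand in the recursion contributes nothing to the coefficient of $1_\bi$. The paper phrases this by noting that the second summand lies in $\cC_{n-2}$, while you make the equivalent concrete observation that the external cap--cup pair survives any expansion of the inner boxes; these are the same idea.
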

\begin{proposition}\label[proposition]{lemma:test}
The JW projectors $f_n^{ij}$ are non-zero idempotents and are annihilated when a cup or cap is added.
\begin{proof}
In this proof we will just write $U_n$ rather than $U_n(\omega_{ij},\omega_{ji})$ for simplicity. We will prove this lemma for the case when $n$ is odd and for adding a cap above the diagram, but the proof for adding a cup below and for even $n$ is very similar.
By expanding $f_{n-1}^{ij}$ via the recursion of \Cref{def:JW-proj}, we obtain the identity 
\begin{align}\label{eq:expanding1}
\resizebox{!}{3\baselineskip}{$\vcenter{\hbox{\MatchingBoxCapSA}}$}&\;=\;\resizebox{!}{3\baselineskip}{$\vcenter{\hbox{\MatchingBoxCapSB}}$}\,-\frac{U_{n-3}}{U_{n-2}}\, \resizebox{!}{3\baselineskip}{$\vcenter{\hbox{\MatchingBoxCapSC}}$} \\ \nonumber
&=\quad \left(\omega_{ij}-\frac{U_{n-3}}{U_{n-2}}\right) f_{n-2}^{ij}\quad 
= \quad \frac{U_{n-1}}{U_{n-2}}\,\, f_{n-2}^{ij}.
\end{align}

We assume (for induction) that the annihilating property holds and $(f_m^{ij})^2=f_m^{ij}$ for all $1\leq m<n$. These properties are clear for $n=0,1$, where there is nothing to check for the annihilating property. If we add a cap at any point other than over the $(n-1)^{th}$ and $n^{th}$ strands then there would be a cap over $f_{n-1}^{ij}$ which would annihilate to zero by the induction hypothesis. If we add a cap at this final two strands, then we get the left hand side of
\begin{align*}\label{eq:fin-1}
\resizebox{!}{2.5\baselineskip}{$\vcenter{\hbox{\MatchingBoxLOUCapT} }$} 
\!-\; \frac{U_{n-2}}{U_{n-1}}
\resizebox{!}{2.5\baselineskip}{$\vcenter{\hbox{\MatchingBoxLOUCapL} }$} 
=\; \resizebox{!}{2.5\baselineskip}{$\vcenter{\hbox{\MatchingBoxLOUCapFT} }$} -\; \resizebox{!}{2.5\baselineskip}{$\vcenter{\hbox{\MatchingBoxLOUCapFL} }$}=0.
\end{align*}
where in the first equality we substitute \eqref{eq:expanding1} and in the second equality we observe that all terms of $f_{n-2}^{ij}$ are annihilated by $f_{n-1}^{ij}$ in the composition besides the leading term $1$ so that the two terms cancel. 
Thus, the annihilating property holds for $f_n^{ij}$.

Now we use the annihilating property to inductively prove $(f_n^{ij})^2=f_n^{ij}$. In the case when $n$ is odd, we get
\begin{equation*}
(f_n^{ij})^2
=\resizebox{!}{2.5\baselineskip}{$\vcenter{\hbox{\MatchingBoxLOUA} }$}  -2\,\frac{U_{n-2}}{U_{n-1}}
\resizebox{!}{3\baselineskip}{$\vcenter{\hbox{\MatchingBoxLOUB} }$} 
+\left(\frac{U_{n-2}}{U_{n-1}}\right)^2
\resizebox{!}{3.5\baselineskip}{$\vcenter{\hbox{\MatchingBoxLOUC} }$} 
= f_{n}^{ij}.
\end{equation*}
Here, the second equality is obtained by simplifying the final summand in the expansion of $(f_n^{ij})^2$. The central part of this final summand equals $(U_{n-1}/U_{n-2})f_{n-2}^{ij}$ by \eqref{eq:expanding1}. We know that $(f_{n-2}^{ij}\otimes 1_{ji})f_{n-1}^{ij}=f_{n-1}^{ij}$ since $f_{n-1}^{ij}$ annihilates all terms of $f_{n-2}^{ij}\otimes 1_{ji}$ besides the leading term $1$. Therefore this final summand equals $(U_{n-2}/U_{n-1})f_{n-1}^{ij} h_{n-1}^{(i,j,\dots)} f_{n-1}^{ij}$, and substituting this into the formula for $(f_{n}^{ij})^2$ yields the second equality.
\end{proof}
\end{proposition}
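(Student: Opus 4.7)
The plan is to proceed by simultaneous induction on $n$, establishing (i) the annihilation property that composing $f_n^{ij}$ with a cap above or cup below gives zero, (ii) idempotence $(f_n^{ij})^2 = f_n^{ij}$, and (iii) non-vanishing. The base cases $n=0$ and $n=1$ are immediate: $f_0^{ij}$ and $f_1^{ij}$ are identity diagrams, the annihilation condition is vacuous since there are too few strands, and idempotence and non-vanishing are obvious.

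For the inductive step, I would first establish the annihilation property, treating caps above (cups below follow by vertical reflection). If the cap is not placed over the last two strands, it lies directly above the copy of $f_{n-1}^{ij}$ appearing in the recursive formula for $f_n^{ij}$, and the inductive hypothesis kills both summands. The delicate case is when the cap sits over the final two strands; here I would apply the recursion and recognise that the resulting configuration simplifies via the key identity that capping $f_{n-1}^{ij}$ against an adjacent through-strand produces $(U_{n-1}/U_{n-2})\, f_{n-2}^{ij}$, obtained by expanding $f_{n-1}^{ij}$ one more step, using the induction hypothesis, and collecting the Chebychev coefficients. The coefficient $U_{n-2}/U_{n-1}$ in \Cref{def:JW-proj} is engineered precisely so that the two summands cancel after this substitution.

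With the annihilation property in hand for $f_n^{ij}$, idempotence follows by expanding $(f_n^{ij})^2$ via the recursion into four summands. The two cross terms vanish because one factor supplies a cup or cap that is annihilated by the adjacent $f_{n-1}^{ij}$ of the other factor. The pure ``identity-times-identity'' term reproduces $f_n^{ij}$ after applying the inductive idempotence of $f_{n-1}^{ij}$. The remaining quadratic term in the cup-cap piece collapses as follows: the central horizontal strip $f_{n-1}^{ij}\otimes 1$ composed with the cup-and-cap configuration reduces by the identity above to a scalar multiple of $f_{n-2}^{ij}$, and absorbing this into the outer $f_{n-1}^{ij}$'s reassembles into $(U_{n-2}/U_{n-1})\, f_{n-1}^{ij}\, h_{n-1}^{\bi}\, f_{n-1}^{ij}$, which is precisely the second summand of $f_n^{ij}$. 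Non-vanishing is then immediate from \Cref{lem: coeff of identity diagram is 1}, since the identity diagram appears with coefficient $1$ in any expansion of $f_n^{ij}$ in terms of basis matchings.

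The main obstacle is the diagrammatic bookkeeping in the idempotence step: one must identify exactly how the nested JW projectors telescope through the annihilation property, and verify that the specific rational coefficients produce the required cancellations. A secondary subtlety is that the parity of $n$ determines whether $\omega_{ij}$ or $\omega_{ji}$ enters the Chebychev recursion at the current step, so the even and odd cases must be written with parallel but notationally distinct diagrams; the underlying inductive logic is identical in both parities, and \Cref{assumption:chebychev-non-zero} guarantees that every $U_{n-1}(\omega_{ij},\omega_{ji})$ in a denominator is non-zero.
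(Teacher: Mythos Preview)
Your overall strategy --- simultaneous induction establishing annihilation first and then idempotence, driven by the key identity that capping $f_{n-1}^{ij}$ against the adjacent through-strand yields $(U_{n-1}/U_{n-2})\,f_{n-2}^{ij}$ --- matches the paper's proof exactly, and your treatment of the annihilation property is correct.

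There is, however, a genuine gap in your idempotence step. Writing $f_n^{ij} = A - cB$ with $A = f_{n-1}^{ij}\otimes 1$, $B = f_{n-1}^{ij}\,h_{n-1}^{\bi}\,f_{n-1}^{ij}$, and $c = U_{n-2}/U_{n-1}$, you claim the two cross terms $AB$ and $BA$ vanish. They do not. The cup and cap in $B$ each connect the $(n-1)$-th strand (emerging from $f_{n-1}^{ij}$) to the $n$-th through-strand; neither places a cap over two adjacent strands of the $f_{n-1}^{ij}$ sitting in $A$. Instead, when $A$ is composed with $B$, the $f_{n-1}^{ij}$ in $A$ simply absorbs into the adjacent $f_{n-1}^{ij}$ of $B$ by the inductive idempotence, giving $AB = BA = B$. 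With your accounting (cross terms zero, $A^2$ giving the first summand, $c^2B^2$ giving the second) you would obtain $A + cB$, which is not $f_n^{ij}$; note also that what you call ``the second summand of $f_n^{ij}$'' is $-cB$, not $+cB$.

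The correct bookkeeping, as in the paper, is $A^2 = A$, $AB = BA = B$ so the cross terms contribute $-2cB$, and $c^2B^2 = cB$ via the key identity and absorption (exactly as you describe for the quadratic term); summing gives $A - 2cB + cB = A - cB = f_n^{ij}$.
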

We also observe the following uniqueness property of JW projectors.
\begin{lemma}\label[lemma]{lem: JW is unique}
Let $\bi=(i,j,\ldots)$ be an alternating sequence of length $n+1$. 
    If $f$ and $f'$ are non-zero idempotents of $[\bi]$  which are not contained in $\cC_{n-1}$ and annihilated by any cap or cup then $f=f'$.
    \begin{proof}
As $f$ and $f'$ are not contained in $\cC_{n-1}$, we can write $f = 1_\bi + g$ and $f' = 1_\bi + g'$, where $g$ and $g'$ consist only of diagrams in $\cC_{n-1}$. The coefficient of $1_\bi$ is necessarily equal to $1$ as diagrams in $\cC_{n-1}$ are closed under multiplication. Now, $g$ and $g'$ consist only of diagrams with caps and cups so they annihilate $f$ and $f'$. Then $f = f(1_\bi + g') = ff' = (1_\bi + g)f' = f'$.
    \end{proof}
\end{lemma}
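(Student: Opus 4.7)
The plan is to adapt the standard uniqueness argument for Jones--Wenzl projectors to the coloured setting, exploiting the filtration structure already developed. The argument proceeds in three steps: (i) identify the coefficient of $1_\bi$ in $f$ and $f'$; (ii) use the annihilation hypothesis to kill all cross terms; (iii) conclude via $f = ff' = f'$.

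For Step (i), I would invoke \Cref{prop: filtration quotient is K^l^n+1 mod}, which gives $\End_{\cC_n/\cC_{n-1}}(\pi([\bi])) \cong \KK$ with unit $\pi(1_\bi)$. Since $f$ is a non-zero idempotent not contained in $\cC_{n-1}$, its image $\pi(f)$ is a non-zero idempotent in $\KK$, hence equal to $\pi(1_\bi)$. Every non-identity crossingless matching in $\End([\bi])$ factors through a shorter object (as illustrated by \eqref{eqn: non identity factor}) and is therefore killed by $\pi$, so comparing coefficients in the basis of crossingless matchings shows that the coefficient of $1_\bi$ in $f$ equals $1$. Hence $f = 1_\bi + g$ where $g$ is a $\KK$-linear combination of non-identity basis diagrams, and similarly $f' = 1_\bi + g'$.

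For Step (ii), recall that every non-identity crossingless matching in $\End([\bi])$ is, up to a scalar produced by closing loops and evaluating via $\omega$, a product of the generators $h_k^\bi$ of \eqref{eq:hkbi}; for the alternating sequence $\bi=(i,j,\ldots)$ the condition $i_k = i_{k+2}$ holds for all $1 \leq k \leq n-1$, so every relevant generator exists. Because $h_k^\bi$ consists of a single cap at the top and a single cup at the bottom, the composition $f \cdot h_k^\bi$ amounts to attaching a cup to the bottom of $f$ at positions $k, k+1$, while $h_k^\bi \cdot f$ amounts to attaching a cap to the top; both vanish by the annihilation hypothesis, so $f \cdot h_k^\bi = 0 = h_k^\bi \cdot f$ for every admissible $k$. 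Propagating this through the factorisation of a general non-identity basis diagram into generators yields $f \cdot d = d \cdot f = 0$ for every non-identity basis diagram $d$, and by linearity $f g' = g f' = 0$.

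Step (iii) is then a one-line calculation: $f f' = f(1_\bi + g') = f$ and $f f' = (1_\bi + g)f' = f'$, so $f = f'$. I expect Step (ii) to demand the most care, since it relies on the precise translation of ``cap or cup added'' into left and right multiplication by every $h_k^\bi$, together with the standard but not quite trivial fact that these generators (with loop factors) span the non-identity part of $\End([\bi])$; once that is confirmed, the remainder is a routine manipulation in the diagrammatic basis.
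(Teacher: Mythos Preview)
Your proposal is correct and follows essentially the same three-step argument as the paper: establish the leading coefficient is $1$, use the annihilation hypothesis to kill the cross terms $fg'$ and $gf'$, and conclude $f = ff' = f'$. Your Step~(i) invokes the filtration quotient explicitly while the paper argues more tersely via closure of $\cC_{n-1}$ diagrams under multiplication, and your Step~(ii) spells out the factorisation through the generators $h_k^\bi$ whereas the paper simply notes that $g,g'$ consist of diagrams with caps and cups; but these are differences in detail only, not in approach.
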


Recall the concatenation $\bi\bj$ of two colour sequences, where the last colour of $\bi$ matches the first colour of $\bj$, from \Cref{def:concat}.

\begin{lemma}\label[lemma]{lem:alternating-sequence-dec}
  Let $\bi=(i_1, \ldots, i_{n+1})\in \{1,\ldots, \ell\}^{n+1}$ be a sequence of colours. There is a unique decomposition of $\bi=\bi_1\ldots \bi_k$ as a concatenation of alternating sequences $\bi_1,\ldots, \bi_k$ of maximal length. 
\end{lemma}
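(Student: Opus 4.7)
The plan is to prove this by induction on the length $n + 1$ of $\bi$. The base case $n = 0$ is immediate: a single-entry sequence $(i_1)$ is trivially alternating and forms its own decomposition with $k = 1$. For the inductive step, the crucial observation is that any sequence of length at most $2$ satisfies the alternating condition $i_k = i_{k+2}$ vacuously, so the maximal alternating prefix of $\bi$ has length $p \geq 2$ whenever $|\bi| \geq 2$. I would set $\bi_1 := (i_1, \ldots, i_p)$ to be this maximal alternating prefix.

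For existence, if $p = n + 1$, the entire sequence $\bi = \bi_1$ is alternating and I take $k = 1$. Otherwise, by maximality of $p$, we have $i_{p+1} \neq i_{p-1}$, so appending any further entry breaks the alternating condition. The suffix $\bi' := (i_p, i_{p+1}, \ldots, i_{n+1})$ has length $n + 2 - p < n + 1$, so by the inductive hypothesis it admits a unique decomposition $\bi_2 \cdots \bi_k$ into maximal alternating pieces. Because $\bi_1$ and $\bi_2$ share the entry $i_p$ at their junction, the concatenation of \Cref{def:concat} yields $\bi_1 \bi_2 \cdots \bi_k = \bi$.

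For uniqueness, suppose $\bi = \bi'_1 \cdots \bi'_{k'}$ is any decomposition into maximal alternating pieces. Then $\bi'_1$ is an alternating prefix of $\bi$, so $|\bi'_1| \leq p$ by maximality of $p$. Conversely, if $|\bi'_1| < p$, then the length-$(|\bi'_1|+1)$ prefix of $\bi$ remains alternating (as it is a prefix of the alternating $\bi_1$), contradicting the maximality of $\bi'_1$ inside the decomposition. Hence $|\bi'_1| = p$ and $\bi'_1 = \bi_1$. The remaining pieces $\bi'_2, \ldots, \bi'_{k'}$ then give a decomposition of the shorter suffix $\bi'$ into maximal alternating pieces, which by the inductive hypothesis must coincide with $\bi_2, \ldots, \bi_k$.

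I do not anticipate any genuine obstacle here, as the statement is a routine combinatorial observation on sequences. The only subtlety is pinning down the intended meaning of \emph{alternating} and \emph{maximal length}: an alternating sequence is one of the form $(i,j,i,j,\ldots)$ with $i,j$ possibly equal (equivalently, $i_k = i_{k+2}$ for all valid $k$, so constant sequences qualify), and maximality means that each piece $\bi_m$ cannot be extended on the right by appending the next entry of $\bi$ while remaining alternating. Once these conventions are fixed, the greedy/inductive argument above is straightforward.
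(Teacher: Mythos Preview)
Your proposal is correct and follows essentially the same greedy/inductive approach as the paper: take the maximal alternating prefix, then recurse on the suffix starting at the overlap point. Your version is in fact more careful than the paper's, which only sketches existence and does not address uniqueness at all; your uniqueness argument (forcing the first piece of any decomposition to coincide with the maximal alternating prefix) fills that gap cleanly.
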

\begin{proof}
  To produce such a decomposition, find the first $k$ such that the $(k-1)$-th colour is not equal to the $(k+1)$-th colour and regard $k$ as the end of the first alternating sequence $\bi_1$. Now, continue with $(i_{k}, i_{k+1},i_{k+2}, \ldots)$ and find the next colour that breaks the alternating pattern and to identify $\bi_2$. Inductively, we can decompose any sequence this way.     
\end{proof}

\begin{example}\label[example]{ex:seq-dec}
The sequence $\bi = (1,1,1,2,1,2,1,2,2)$ decomposes into the alternating sequences $\bi_1=(1,1,1)$, $\bi_2=(1,2,1,2,1,2)$, and $\bi_3=(2,2)$.
\end{example}

We denote the coloured JW projector $f_n^{ij}$ by $f_{\bi}$, where $\bi=(i,j,i,\ldots)$ is the alternating sequence of length $n+1$. For example, $f_{(1,2,1,2)} = f_3^{12}$. We will now generalise this construction to arbitrary sequences of colours.

\begin{definition}\label[definition]{def:f-bi}
Let $\bi=(i_1, \ldots, i_{n+1})\in \{1,\ldots, \ell\}^{n+1}$ be a sequence of colours. The \emph{coloured JW projector} associated to $\bi$ is defined as the tensor product
$$f_{\bi}:=f_{n_1}^{i_1,j_1}\otimes \ldots \otimes f_{n_k}^{i_k,j_k},$$
for the decomposition $\bi=\bi_1\ldots \bi_k$ of $\bi$ as a concatenation of alternating sequences
 $\bi_a=(i_a,j_a,\ldots)$ of length $n_{a}+1$, for $a=1,\ldots, k$,  from \Cref{lem:alternating-sequence-dec}
\end{definition}

\begin{example}
With the sequence $\bi=(1,1,1,2,1,2,1,2,2)$ from \Cref{ex:seq-dec}, we have
\begin{equation*}
f_\bi = f_2^{1,1}\otimes f_5^{1,2}\otimes f_1^{2,2}.
\end{equation*}
\end{example}

\begin{proposition}\label[proposition]{prop: new jws work}
    The coloured JW projectors $f_\bi$ are the unique non-zero idempotents whose coefficient of $1_\bi$ is equal to $1$ that are annihilated by adding any caps or cups.
    \begin{proof}
    By fuctoriality of the tensor product, $(f\otimes g)(f'\otimes g)=ff'\otimes gg'$, we have that $f_\bi$ is again an idempotent.
    Moreover, the coefficient of $1_\bi$ in $f_\bi$ must also be $1$ due to compatibility of tensor product and identities. Hence, $f_\bi\neq 0$.

Write $f_{\bi}=f_{\bi_1}\otimes \ldots \otimes f_{\bi_k}$ for a decomposition of $\bi$ into alternating sequences $\bi_a$ as in \Cref{lem:alternating-sequence-dec}. 
Any caps or cups added on neighbouring strands that are part of an alternating sequence $\bi_a$ will be annihilated by $f_{\bi_a}$. Now consider adding a cap at the $(t-1)$-th and $t$-th strand at the break of two neighbouring alternating sequences. This means that the colours $i_{t-1}$ and $i_{t+1}$ are distinct. This leads to the multiplication of the disjoint idempotents $e_{i_{t-1}}$ and $e_{i_{t+1}}$ in the region connected by this cap or cup in every diagram that appears in $f_{\bi}$, hence, giving zero. 
    \end{proof}
\end{proposition}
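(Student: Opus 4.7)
The plan is to verify the four required properties of $f_\bi$ (nonzero, idempotent, coefficient of $1_\bi$ equal to $1$, annihilation by caps/cups) and then deduce uniqueness essentially by the same argument as in \Cref{lem: JW is unique}. The first three are largely formal consequences of the corresponding properties for the alternating case already established in \Cref{lemma:test} and \Cref{lem: coeff of identity diagram is 1}, combined with functoriality of the tensor product. The only genuine new content is the annihilation argument at a boundary between two blocks in the decomposition of \Cref{lem:alternating-sequence-dec}.

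First I would write $f_\bi = f_{\bi_1}\otimes\cdots\otimes f_{\bi_k}$ and observe that $1_\bi = 1_{\bi_1}\otimes\cdots\otimes 1_{\bi_k}$. By functoriality of $\otimes$, $(f_{\bi_1}\otimes\cdots\otimes f_{\bi_k})\circ(f_{\bi_1}\otimes\cdots\otimes f_{\bi_k}) = f_{\bi_1}^2\otimes\cdots\otimes f_{\bi_k}^2 = f_\bi$ by \Cref{lemma:test}, so $f_\bi$ is idempotent. Expanding each tensor factor using \Cref{def:JW-proj} shows that the only term contributing to $1_\bi$ is the product of the leading $1_{\bi_a}$ terms, and by \Cref{lem: coeff of identity diagram is 1} this coefficient equals $1$. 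In particular $f_\bi\neq 0$.

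For the annihilation property, I would split into two cases according to where a cap (or cup) is added. If both endpoints of the cap lie within the strands of a single block $\bi_a$, then the cap is annihilated by $f_{\bi_a}$ via \Cref{lemma:test}, and consequently by $f_\bi = \bigotimes_b f_{\bi_b}$. The interesting case is a cap straddling the boundary between $\bi_a$ and $\bi_{a+1}$: by construction of the decomposition in \Cref{lem:alternating-sequence-dec}, the colour immediately to the left of this cap is the last alternating colour of $\bi_a$ and the colour immediately to the right is the second colour of $\bi_{a+1}$, and the maximality of the alternating segments forces these two colours to be distinct idempotents in $\KK^\ell$. Once the cap closes off a region, the outer region inherits both colour labels, so every diagram in the expansion involves the product of two distinct orthogonal idempotents and hence vanishes.

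For uniqueness, I would mimic the proof of \Cref{lem: JW is unique}: suppose $f$ is another such idempotent and write $f = 1_\bi + g$, $f_\bi = 1_\bi + g'$, where $g, g'$ are linear combinations of non-identity crossingless matchings on $[\bi]$. Every such matching contains at least one cap at the bottom (equivalently a cup at the top), so when composed with $f_\bi$ on the right (respectively $f$ on the left) the annihilation property forces $g\cdot f_\bi = 0$ and $f\cdot g' = 0$. Consequently $f\cdot f_\bi = f_\bi = f$, proving uniqueness. The main obstacle is handling the boundary-straddling cap cleanly, since one must be careful to track exactly which regions in the composed diagram inherit which colour idempotents; this is settled by the maximality in \Cref{lem:alternating-sequence-dec}.
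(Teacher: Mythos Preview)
Your proposal is correct and follows essentially the same approach as the paper: functoriality of $\otimes$ for the idempotent property, the tensor product of leading terms for the coefficient of $1_\bi$, and the same two-case split for annihilation (within a block via \Cref{lemma:test}, at a block boundary via orthogonality of the distinct idempotents forced by maximality in \Cref{lem:alternating-sequence-dec}). You additionally spell out the uniqueness argument by adapting \Cref{lem: JW is unique} to general $\bi$, which the paper's proof leaves implicit; this is a welcome addition since the statement does assert uniqueness.
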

\begin{remark}
    The construction of coloured JW projectors for the subcategory $S\mathcal{TL}(\omega)$ from \Cref{rem:EL-version} was already given in \cite{EL}*{Section 2.6}. In the two-coloured case, this goes back to \cite{Eli}*{Appendix A.6} and was generalised in \cite{Haz}*{Theorem~A} to any commutative base ring.
\end{remark}

\subsection{Semisimplicity}

We now prove the first main result of this article regarding semisimplicity of $\TL(\KK^\ell, \omega)$.

\begin{definition}\label[definition]{def:T-bi}
If for a sequence of colours $\bi$ of length $n+1$ the JW projector $f_\bi$ of \Cref{def:f-bi} exists, then we define $T_\bi:= ([\bi], f_\bi)$ to be the corresponding object in $\TL(\KK^\ell, \omega)$.
\end{definition}

\begin{theorem}\label{thm: TL is semisimple}
    The category $\cC = \TL(\KK^\ell,\omega)$ is semisimple if and only if \Cref{assumption:chebychev-non-zero} is satisfied.
\end{theorem}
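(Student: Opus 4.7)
The plan is to prove the two implications separately using the coloured Jones--Wenzl projectors of \Cref{def:f-bi} and the filtration $\cC_n$ from \Cref{sec:filtration}.

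For sufficiency, assume \Cref{assumption:chebychev-non-zero}, so that every $f_\bi$ and hence every $T_\bi$ is well-defined. I will show $\cC$ satisfies the hypotheses of \Cref{lem: object semisimple implies semisimple} with the family $\{T_\bi\}$ of \Cref{def:T-bi}. The central computation is
\[ \End_\cC(T_\bi) = f_\bi \End_\cC([\bi]) f_\bi \cong \KK, \]
because any crossingless matching $d \in \End_\cC([\bi])$ other than $1_\bi$ has at least one cap along its top boundary and at least one cup along its bottom boundary, so by the annihilation property in \Cref{prop: new jws work} we have $f_\bi d f_\bi = 0$ and only $\KK \cdot f_\bi$ survives. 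The same annihilation argument yields $\Hom_\cC(T_\bi, T_\bj) = 0$ for $\bi \neq \bj$: if $\bi$ and $\bj$ have distinct lengths or distinct boundary colours then every diagram in $\Hom_\cC([\bi],[\bj])$ already carries nontrivial cups or caps killed by the surrounding projectors, and if $\bi$ and $\bj$ agree at the boundary but differ in some interior colour then no identity diagram appears. To decompose an arbitrary object, I induct along the filtration: for each $\bi$ of length $n+1$ the splitting $1_\bi = f_\bi + (1_\bi - f_\bi)$ yields $[\bi] \cong T_\bi \oplus ([\bi], 1_\bi - f_\bi)$, and by \Cref{lem: coeff of identity diagram is 1} the second idempotent lies in the ideal generated by $\cC_{n-1}$, so the complementary summand falls inside $\cC_{n-1}$ and decomposes by induction. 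Applying \Cref{lem: object semisimple implies semisimple} then gives semisimplicity.

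For necessity, I argue by contrapositive. Suppose $U_N(\omega_{ij}, \omega_{ji}) = 0$ for some pair $(i,j)$ and minimal $N \geq 1$, and let $\bi = (i, j, i, \ldots)$ be the alternating sequence of length $N+1$. Minimality guarantees that $f_{N-1}^{ij}$ exists, and the calculation \eqref{eq:expanding1} inside the proof of \Cref{lemma:test} shows that the composite of $f_{N-1}^{ij}$ with a single cap--cup inserted on the last two strands equals $\bigl(U_N(\omega_{ij},\omega_{ji})/U_{N-1}(\omega_{ij},\omega_{ji})\bigr)\, f_{N-2}^{ij}$, which vanishes identically. I will use this vanishing, together with the cup/cap relations among the generators $h_k^\bi$ from \eqref{eq:hkbi}, to exhibit a nonzero central nilpotent in the finite-dimensional algebra $\End_\cC([\bi])$, contradicting \Cref{lem: no central nilpotent in semisimple algebra}.

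The hard step will be this final construction in the necessity argument. A natural candidate nilpotent is built from $f_{N-1}^{ij}$ together with an additional cup-or-cap strand, reflecting the broken recursion for $f_N^{ij}$. Confirming that this element is nonzero modulo $\cI_{N-1}$, squares to zero, and genuinely commutes with all of $\End_\cC([\bi])$ rather than only with a proper subalgebra requires careful bookkeeping of the cap/cup relations; alternatively, one may pass to the indecomposable $X_\bi$ of \Cref{prop:bijection of indecomposables} and show directly that $\End_\cC(X_\bi)$ contains a nonzero nilpotent and hence cannot be a division ring, again contradicting semisimplicity.
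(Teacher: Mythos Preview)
Your sufficiency argument is correct and matches the paper's approach; the inductive decomposition via $1_\bi = f_\bi + (1_\bi - f_\bi)$ is equivalent to the paper's identification $X_\bi = T_\bi$.

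The necessity argument, however, contains an off-by-one error that breaks it. With $N$ minimal such that $U_N(\omega_{ij},\omega_{ji})=0$, the recursion of \Cref{def:JW-proj} only requires $U_{N-1}\neq 0$ to produce $f_N^{ij}$; minimality guarantees this, so $f_N^{ij}$ \emph{does} exist. Consequently, for the alternating sequence $\bi$ of length $N+1$ that you chose, $T_\bi$ is defined and the decomposition argument from the sufficiency direction still applies: $[\bi]$ splits into simples and $\End_\cC([\bi])$ is semisimple. There is no nonzero nilpotent to be found there, central or otherwise, and your alternative of passing to $X_\bi$ fails for the same reason, since $X_\bi = T_\bi$ is simple with endomorphism ring $\KK$.

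Relatedly, your reading of \eqref{eq:expanding1} has the indices shifted: closing off $f_{N-1}^{ij}$ against an extra strand yields the factor $U_{N-1}/U_{N-2}$, which is nonzero. To obtain the vanishing factor $U_N/U_{N-1}$ you must close off $f_N^{ij}$ against an extra strand, and that element lives on $N+1$ strands, i.e.\ on the alternating sequence of length $N+2$. The paper works exactly there: it sets $f = f_N^{ij}\otimes 1$, shows $([\bi],f)$ is indecomposable, and computes that $\End_\cC([\bi],f)$ is spanned by $f$ and $e_\bi := f h_N^\bi f$ with $e_\bi^2 = \tfrac{U_N}{U_{N-1}}\,(\cdots) = 0$, giving $\End_\cC([\bi],f)\cong \KK[e_\bi]/(e_\bi^2)$. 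Note also that the paper never needs centrality in the large algebra $\End_\cC([\bi])$; it suffices that the two-dimensional endomorphism ring of the indecomposable summand is itself not semisimple.
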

    \begin{proof}

First assume that \Cref{assumption:chebychev-non-zero} is satisfied. We want to show that $\cC$ is semisimple. 
For $\bi$ a colour sequence of length $n+1$, we first show that $\End_{\cC}(T_\bi) = \KK$. 
 Since   $\End_{\cC}(T_\bi) = f_\bi \End_{\cC}([\bi])f_\bi$, and $\End_\cC([\bi])$ is the $\KK$-span of $f_\bi gf_\bi $ for all diagrams $g$ of $n+1$ stands with colours $\bi$ at the top and bottom, if $g$ has any caps or cups then it will annihilate $f_\bi$. The only non-zero option is then $g =1_\bi$, and so $\End_{\cC}(T_\bi)$ is one dimensional and $T_\bi$ is an indecomposable object. 
 
 Moreover, for any two such sequences $\bi \neq \bj$ we have $\Hom_\cC(T_\bi, T_\bj) = 0$. This follows as only the identity diagram would not be annihilated by composing with the JW projectors, but in this diagram with colours $\bi$ at the top and $\bj$ at the bottom orthogonal idempotents are multiplied, giving zero.
    
Now, we consider the image $\pi(T_\bi)$ in $\cC_n/\cC_{n-1}$. The coefficient of $1_\bi$ in $f_\bi$  is $1$, and all the other diagrams in $f_\bi$ will factor through an object of $\cC_{n-1}$. Therefore every diagram in $1_\bi-f_\bi$ factors through an object of $\cC_{n-1}$, so $\pi(1_{([\bi], 1_\bi-f_\bi)}) = \pi(1_\bi-f_\bi)) = 0$ and $\pi(([\bi], 1_\bi-f_\bi))\cong 0$. Therefore $\pi([\bi]) = \pi(T_\bi)$. As $T_\bi$ is indecomposable in $\cC_n$ and $T_\bi\notin \cC_{n-1}$,  we must have $X_\bi = T_\bi$. By \Cref{prop:bijection of indecomposables}, every indecomposable object in $\cC_n$ is isomorphic to one of the $T_\bi$. This shows that every indecomposable object in $\cC$, $X_\bi = T_\bi$, is simple and since $\cC$ is KS this implies that every object in $\cC$ is isomorphic to a finite direct sum of simple objects. Therefore $\cC = \TL(\KK^\ell,\omega)$ is semisimple by \Cref{lem: object semisimple implies semisimple} and the objects $T_\bi$ are simple objects.

Conversely, assume that \Cref{assumption:chebychev-non-zero} is not satisfied. We will construct an indecomposable object that is not simple. 
There exist $1\leq i,j \leq \ell$ such that $U_n(\omega_{ij}, \omega_{ji}) = 0$ for some $n\geq 0$. We can always choose $n$ to be minimal, so that $U_k(\omega_{ij}, \omega_{ji})\neq 0$ for all $1\leq k\leq n$. This means that for the alternating sequence $(i,j,i,j,\dots)$ the JW projectors $f_k$ will exist for all $1\leq k \leq n$, and we can find them using the recursion formula in \Cref{def:JW-proj}. We will therefore focus on the sequence $\bi = (i,j,i,\dots)$ of length $n+2$, for which there is no JW projector defined by the recursion.

For this sequence we consider the idempotent
\begin{equation}f = \begin{cases}
    f_n^{ij}\otimes 1_{(ij)}, &\text{if }n\text{ is even,}\\
    f_n^{ij}\otimes 1_{(ji)}, &\text{if }n\text{ is odd.}
\end{cases}\end{equation} This $f$ provides a splitting of $[\bi]$ into a direct sum $[\bi] = ([\bi], f) \oplus ([\bi], 1_\bi-f)$, and since $\pi([\bi])$ is simple in $\cC_{n+1}/C_n$ one of either $\pi(([\bi], f))$ or $\pi(([\bi], 1_\bi-f))$ must be $0$. We know by \Cref{lem: coeff of identity diagram is 1} that the coefficient of the identity diagram in $f_n^{ij}$ is $1$, so the coefficient of $1_\bi$ in $1_\bi - f$ is $0$. Therefore every diagram in $1_\bi - f$ will factor through an object in $\cC_n$, so $\pi(([\bi], 1_\bi-f))\cong 0$ and $\pi(([\bi], f))\cong \pi([\bi])$.

We claim that the object $([\bi], f)$ described above is indecomposable but not simple.
The endomorphisms of $([\bi], f)$ in $\cC$ are $f \End_\cC([\bi])f$, which is the $\KK$-span of $fgf$ for all diagrams $g$ of $n+1$ stands with colours $\bi$ at the top and bottom. The definition of $f$ implies that if $g$ has any caps or cups within the first $n$ strands then $fgf = 0$. This means the first $n-1$ strands must just be vertical lines, and the only options for $g$ are $1_\bi$ or $h_n^\bi$ and $\End_\cC(([\bi],f))$ is the $\KK$-span of $f1_\bi f$ and $e_\bi:=fh_n^\bi f$. 
Graphically, we have
        \begin{equation*}
        e_\bi \; = \vcenter{\hbox{\resizebox{!}{6\baselineskip}{\GeneratoreEven}}}
        \end{equation*}
        When $n$ is even, using the same argument as in \eqref{eq:expanding1}, we find that
        \begin{align*}        e_\bi^2=\vcenter{\hbox{\resizebox{!}{5\baselineskip}{\ZeroSquareEven}}} 
= \frac{U_{n}(\omega_{ij},\omega_{ji})}{U_{n-1}(\omega_{ij},\omega_{ji})}\vcenter{\hbox{\resizebox{!}{5.5\baselineskip}{\ZeroSquareEvenTwo}}},
        \end{align*} 
        which is equal to zero since $U_n(\omega_{ij},\omega_{ji})=0$ by assumption. Similarly if $n$ is odd we get $e_\bi^2 = \left(\omega_{ji} - \frac{U_{n-2}(\omega_{ij},\omega_{ji})}{U_{n-1}(\omega_{ij},\omega_{ji})}\right)f_{n-1}^{ij} = \frac{U_{n}(\omega_{ij},\omega_{ji})}{U_{n-1}(\omega_{ij},\omega_{ji})}f_{n-1}^{ij} = 0$. In summary, we have shown that $\End_\cC(([\bi],f))$ is the $\KK$-span of $f$ and $e_\bi$, $f^2 = f$ and $e_\bi^2 = 0$. Together with the fact that $fe_\bi = e_\bi f = e_\bi$ this shows that $\End_\cC(([\bi],f))\cong \KK[e_\bi]/(e_\bi^2)$. This is a local ring so $([\bi],f)$ is an indecomposable object in $\cC$. Its Jacobson radical is $(e_\bi)\neq 0$ so $\End_\cC(([\bi],f))$ is not semisimple and hence $\cC = \TL(\KK^\ell,\omega)$ is not semisimple.
    \end{proof}

Note that when $\ell=1$, so that $\TL(\KK,\omega)=\TL(d)$, for some $d\in \KK$, \Cref{thm: TL is semisimple} specialises to the statement that $\TL(d)$ is semisimple if and only if $d$ is not a root of a Chebychev polynomial. Equivalently, $d\neq q+q^{-1}$ for a root of unity $q$. This characterisation of semisimplicity of classical TL categories is well-known, see e.g. \cite{DG}, and can be derived from the existence of JW projectors in \cite{Wen}. 

\subsection{Gram determinants and meanders}
\label{sec:Gram-meander}

We fix the trace $\tr\colon \KK^\ell\to \KK$, $e_i\mapsto 1$. This trace induces a non-degenerate pairing 
$$(-,-)\colon \KK^\ell \times \KK^\ell \to \KK, \quad (a,b):=\tr(ab),$$
making $\KK^\ell$ a Frobenius algebra. 

We can extend this trace to a trace $\tr\colon \TL_n(\KK^\ell,\omega)\to \KK$  by closing the strings of a diagram, i.e., by $\KK$-linearly extending the assignment 
\begin{equation}
c\;\longmapsto \;\tr\Bigg(\vcenter{\hbox{\resizebox{!}{4\baselineskip}{\TraceTL}}}\Bigg),
\end{equation}
where $c$ is a crossingless matching with coloured regions. This gives a pairing 
\begin{equation}\label{eq:trace-pairing-TL}
    (-,-)\colon  \TL_n(\KK^\ell,\omega)\times  \TL_n(\KK^\ell,\omega)\to \KK, \quad (a,b):=\tr(ab^t),
\end{equation}
where $(-)^t\colon \TL_n(\KK^\ell,\omega)\to \TL_n(\KK^\ell,\omega)$ denotes the \emph{transposition} defined by $\KK$-linearly extending reflection along the horizontal axis of crossingless matchings. The pairings \eqref{eq:trace-pairing-TL} are bilinear pairings on the endomorphism algebras $\End_\cC([n])=\TL_n(\KK^\ell,\omega)$. They restrict to pairings on $\End_\cC(X)$ for any direct summand $X$ of $[n]$. If \Cref{assumption:chebychev-non-zero} holds, we have simple objects $T_\bi$ for each sequence $\bi$ of colours. The restriction of the pairing to $\End_\cC(T_\bi)$ is determined by
$$(f_\bi,f_\bi)=\tr(f^2_\bi)=\tr(f_\bi)=\tr(\id_{T_\bi}),$$
the \emph{quantum dimension} of $T_\bi$.

\begin{lemma}\label[lemma]{lem:q-dim}
    The quantum dimension of the simple object $T_\bi$ are products of Chebychev polynomials $U_n(\omega_{ij},\omega_{ji})$, with $i,j\in \{1,\ldots, \ell\}$.
\end{lemma}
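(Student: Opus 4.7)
The plan is to identify $\tr(f_\bi)$ as a product of two-variable Chebychev polynomials, one for each alternating block in the decomposition of $\bi$.

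The first step exploits the tensor decomposition $f_\bi = f_{\bi_1} \otimes \cdots \otimes f_{\bi_k}$ from \Cref{def:f-bi}. I would show that the trace $\tr$ is multiplicative over the monoidal product, meaning $\tr(f \otimes g) = \tr(f)\tr(g)$ whenever the colour sequences match at the interface. This follows from the observation that the nested closure arcs defining $\tr$ can be arranged so that the strings of each tensor factor close among themselves, with the colour compatibility at the tensor interface ensuring the region labels around the two factors remain consistent and separate. Combined with the tensor factorisation of $f_\bi$, this reduces the lemma to computing $\tr(f_n^{ij})$ for each alternating factor $f_{\bi_a}=f_{n_a}^{i_a j_a}$.

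The central computation is then the identity $\tr(f_n^{ij}) = U_n(\omega_{ij},\omega_{ji})$, proved by induction on $n$. The base cases are direct: closing the empty diagram for $f_0^{ij}$ gives $\tr(e_i) = 1 = U_0$, while closing the single strand of $f_1^{ij} = 1_{(i,j)}$ yields a single loop whose interior $j$ and surrounding $i$ combine to $\tr(e_i \omega(e_j)) = \omega_{ji} = U_1(\omega_{ij},\omega_{ji})$. For the inductive step I apply the recursion of \Cref{def:JW-proj} together with linearity of $\tr$:
\begin{equation*}
\tr(f_n^{ij}) = \tr(f_{n-1}^{ij} \otimes 1) \;-\; \frac{U_{n-2}}{U_{n-1}}\, \tr\bigl((f_{n-1}^{ij} \otimes 1)\, h_{n-1}\, (f_{n-1}^{ij} \otimes 1)\bigr).
\end{equation*}
Multiplicativity yields $\tr(f_{n-1}^{ij} \otimes 1) = U_{n-1}\cdot \omega_{ji}$ for odd $n$ and $U_{n-1}\cdot\omega_{ij}$ for even $n$, precisely matching the coefficient of $U_{n-1}$ in the Chebychev recursion of \Cref{def:two-var-Cheb}. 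For the second summand, cyclicity of $\tr$ combined with the idempotence of $f_{n-1}^{ij} \otimes 1$ reduces the expression to $\tr\bigl(h_{n-1}(f_{n-1}^{ij} \otimes 1)\bigr)$. A Markov-type identity then equates this to $\tr(f_{n-1}^{ij}) = U_{n-1}$ inductively, and substituting into the Chebychev recursion gives $\tr(f_n^{ij}) = U_n(\omega_{ij},\omega_{ji})$, completing the induction and yielding the product formula $\tr(f_\bi) = \prod_{a=1}^k U_{n_a}(\omega_{i_a j_a},\omega_{j_a i_a})$.

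The main obstacle is the Markov-type identity $\tr_n\bigl(h_{n-1}(f \otimes 1)\bigr) = \tr_{n-1}(f)$ used in the inductive step. Its proof requires verifying diagrammatically that, after trace closure, the identity strand at position $n$ together with the cap and cup of $h_{n-1}$ and the nested closure arcs for strands $n-1$ and $n$ combine to produce exactly the trace closure of $f$ on its $n-1$ strands, with no extra loop or scalar factor. In the coloured setting the subtlety is that each enclosed region picks up a different $\omega_{ij}$ or $\omega_{ji}$ factor, so one must check that the factors introduced by the closure of the extra strand cancel precisely against those produced by the cap–cup evaluation, so that the identity lifts cleanly from the classical unlabelled case.
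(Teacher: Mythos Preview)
Your approach is essentially the same as the paper's: reduce to alternating blocks via multiplicativity of the trace, then prove $\tr(f_n^{ij})=U_n(\omega_{ij},\omega_{ji})$ by induction using the JW recursion. The paper packages the inductive step slightly differently, writing directly
\[
\tr(f_n)=\Big(\omega_{*} - \tfrac{U_{n-2}}{U_{n-1}}\Big)\tr(f_{n-1})=\tfrac{U_n}{U_{n-1}}\tr(f_{n-1}),
\]
i.e.\ observing that closing the $n$-th strand in each summand of the recursion yields a scalar times $\tr(f_{n-1})$. For the second summand this amounts to the same diagrammatic fact you isolate as a Markov-type identity.

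Your flagged obstacle is not a genuine obstacle. In the coloured setting the partial trace of $h_{n-1}^{\bi}$ over the last strand does return the identity on $[n-1]$ with no extra scalar: the only potentially problematic region is the one enclosed by the cap, cup, and closure arc, but this carries colour $i_{n+1}$, while the region to the left of the cap/cup carries $i_{n-1}$, and these coincide precisely because $h_{n-1}^{\bi}$ is only nonzero when $i_{n-1}=i_{n+1}$ (automatic for alternating sequences). So the zigzag straightens with matching colours and no $\omega$-factor appears. With that checked, your argument goes through and coincides with the paper's; there is no need to worry about cancellation of colour-dependent factors.
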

\begin{proof}
    We start with an alternating sequence $\bi=(i,j,\ldots)$ of length $n+1$ and denote $f_n=f_n^{ij}$. Clearly, $\tr(f_0)=\tr(e_i)=1$ and $\tr(f_1)=\omega_{ji}$. Moreover, we derive the recursion 
    $$\tr(f_n)=\begin{cases}
   \big( \omega_{ji} - \frac{U_{n-2}(\omega_{ij}, \omega_{ji})}{U_{n-1}(\omega_{ij}, \omega_{ji})}\big) \tr(f_{n-1}) & \text{($n$ odd)},\\
   \big( \omega_{ij} - \frac{U_{n-2}(\omega_{ij}, \omega_{ji})}{U_{n-1}(\omega_{ij}, \omega_{ji})}\big) \tr(f_{n-1}) & \text{($n$ even)}
    \end{cases} \Bigg\rbrace= \frac{U_{n}(\omega_{ij}, \omega_{ji})}{U_{n-1}(\omega_{ij}, \omega_{ji})}  \tr(f_{n-1}),
    $$
    from \Cref{def:JW-proj} and \Cref{def:two-var-Cheb}. By induction, $\tr(f_n)=U_{n}(\omega_{ij}, \omega_{ji}).$
    The JW projectors for a general sequence $\bi$ are tensor products of the ones of alternating sequences appearing in the decomposition from \Cref{lem:alternating-sequence-dec}. Hence, the claim follows from $\tr(f\otimes g)=\tr(f)\tr(g)$. 
\end{proof}

We can now prove a generalisation of \cite{KL}*{Conjecture 23} on the Gram matrices, the matrices representing the bilinear pairing \eqref{eq:trace-pairing-TL} on $\TL_n(\KK^\ell, \omega)$.

\begin{corollary}\label[corollary]{cor:conjecture23}
Up to sign, the Gram determinants of the pairings \eqref{eq:trace-pairing-TL} are products of Chebychev polynomials $U_n(\omega_{ij},\omega_{ji})$, with $i,j\in \{1,\ldots, \ell\}$. In particular, the pairings are non-degenerate for all $n$ if and only if \Cref{assumption:chebychev-non-zero} holds. 
\end{corollary}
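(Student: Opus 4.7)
The plan is to reduce to the semisimple case via \Cref{thm: TL is semisimple}, compute the Gram determinant block-wise using \Cref{lem:q-dim}, and then extend the resulting identity to arbitrary $\omega$ by a Zariski-density argument.

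First, assuming \Cref{assumption:chebychev-non-zero}, the semisimplicity of $\cC = \TL(\KK^\ell,\omega)$ together with \Cref{cor: indec of C are Xis} yields a decomposition
\begin{equation*}
[n] \;\cong\; \bigoplus_\bi T_\bi^{\oplus m_\bi(n)}
\end{equation*}
where $\bi$ ranges over colour sequences and $m_\bi(n) \in \ZZ_{\geq 0}$ are multiplicities, almost all of which are zero. Passing to endomorphisms, \Cref{prop: filtration quotient is K^l^n+1 mod} and the fact that $\End_\cC(T_\bi) = \KK$ give an isomorphism of $\KK$-algebras
\begin{equation*}
\TL_n(\KK^\ell,\omega) \;\cong\; \prod_\bi M_{m_\bi(n)}(\KK),
\end{equation*}
under which the categorical trace becomes $\dim_q(T_\bi)$ times the ordinary matrix trace on each block, and the transposition $(-)^t$ becomes ordinary matrix transposition on each block after a suitable choice of bases of $\Hom_\cC(T_\bi,[n])$ (compatible with the reflection symmetry of the $f_\bi$). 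For the standard pairing $(A,B) \mapsto c\,\tr(AB^T)$ on $M_m(\KK)$ one computes directly on elementary matrices that the Gram matrix is $c \cdot I_{m^2}$, so its determinant is $c^{m^2}$. Hence the $\bi$-block contributes $\pm\dim_q(T_\bi)^{m_\bi(n)^2}$ and, invoking \Cref{lem:q-dim}, the total Gram determinant is, up to sign, a product of the two-variable Chebychev polynomials $U_k(\omega_{ij},\omega_{ji})$.

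To drop \Cref{assumption:chebychev-non-zero}, I would observe that both the Gram determinant and the claimed product are polynomials in the matrix entries $\omega_{ij}$ with integer coefficients, as each entry of the Gram matrix is obtained by closing off diagrams and applying $\omega$ to the loop labels. They agree on the Zariski-open subset of $\KK^{\ell\times \ell}$ on which \Cref{assumption:chebychev-non-zero} is satisfied, and this subset is nonempty (any $\Omega$ with jointly generic entries works) and hence Zariski-dense. Two polynomials agreeing on a dense set are equal, so the formula holds identically in $\omega$. The non-degeneracy characterisation is then immediate: the product is nonzero for every $n$ if and only if every factor $U_k(\omega_{ij},\omega_{ji})$ is nonzero, i.e.\ precisely when \Cref{assumption:chebychev-non-zero} holds.

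The main technical obstacle is the book-keeping around the transposition: one must identify the restriction of the trace pairing on each matrix block $M_{m_\bi(n)}(\KK)$ with the standard form $c\,\tr(AB^T)$ rather than some twist by a nontrivial bilinear form. Since the JW projectors $f_\bi$ are invariant under vertical reflection, this can be arranged by picking a basis of $\Hom_\cC(T_\bi,[n])$ that pairs with itself under reflection up to a signed permutation; the sign ambiguity introduced is exactly what is absorbed by the ``up to sign'' phrase in the statement.
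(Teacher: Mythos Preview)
Your forward direction is essentially the paper's argument: decompose $[n]$ into simples under \Cref{assumption:chebychev-non-zero}, pass to a block basis built from the $f_\bi$, and observe that the Gram matrix becomes sparse with nonzero entries given by the quantum dimensions $\tr(f_\bi)$. The paper phrases this as ``one nonzero entry per row and column'' rather than writing out the matrix-algebra decomposition explicitly, but the content is the same, and both rely on \Cref{lem:q-dim}.

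The genuine difference is in the converse. The paper argues directly: if some $U_n(\omega_{ij},\omega_{ji})=0$, then for the alternating sequence $\bi$ of length $n+2$ the indecomposable $X_\bi$ has non-semisimple endomorphism ring $\KK[e_\bi]/(e_\bi^2)$, and the nilpotent $e_\bi$ lies in the radical of the restricted pairing, forcing degeneracy of the full pairing on $[n+1]$. You instead extend the determinant formula to all $\omega$ by Zariski density and read off non-degeneracy from the factorisation. Your route is cleaner and avoids checking that the radical actually annihilates under $\tr$, but it needs one extra care you did not make fully explicit: the density argument should be run over the integral polynomial ring $\ZZ[\omega_{ij}]$ (or over an infinite field) and then specialised, since over a finite $\KK$ a Zariski-dense set of points need not determine a polynomial. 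You gesture at this with ``integer coefficients'' but should say it outright.

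On your flagged obstacle: you are right that controlling the change-of-basis determinant is the crux, and the paper is equally informal here (it only mentions ``signs from permuting rows and columns''). The clean justification is that the block basis can be chosen unitriangular with respect to the filtration, since each $f_\bi$ has leading term $1_\bi$; this gives $\det P = 1$ without any appeal to reflection symmetry.
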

\begin{proof}
Assume that for some $n\geq 0$, $U_k(\omega_{ij,ji})\neq 0$ for all $1\leq k< n$. Then the JW projectors $f_{\bi}$ from \Cref{def:f-bi} exist for all sequences $\bi$ of colours of length less than or equal to $n+1$. Hence, $[n]$ is a direct sum of the corresponding simple objects $T_\bi$.  By choosing a suitable basis for $TL_n(\KK^\ell,\omega)$ consisting of identities on these objects (or isomorphisms between different copies of the same simple object), we can achieve that the Gram matrix has a sparse form, with only one non-zero entry in each row and each column. The non-zero entries are of the form $\tr(f_n)$. Hence, by \Cref{lem:q-dim}, the Gram determinant is a product of two-variable Chebychev polynomials $U_k(\omega_{ij},\omega_{ji})$, $1\leq k<n$, up to signs from permuting rows and columns. 

Conversely, if $U_n(\omega_{ij},\omega_{ji})=0$ for some $n$, then the pairing \eqref{eq:trace-pairing-TL} for $n+1$ is degenerate. Indeed, decomposing $[n+1]$ as a direct sum of indecomposable objects, we can restrict the pairing to $\End_\cC(X_{\bi})$, where $X_\bi$ denotes the indecomposable object associated to $\bi=(i,j,\ldots)$, the corresponding alternating sequence of length $n+2$. As $X_\bi$ is not simple, as shown in the proof of \Cref{thm: TL is semisimple}, the pairing is degenerate as it contains the radical of $\End_\cC(T_{\bi})$. This forces the Gram determinant of the pairing for the  object $[n+1]$ to be zero.
\end{proof}

\begin{remark}
In \cite{KL}*{Definition~11}, the concept of a \emph{$\KK$-spherical} wrapping map $\omega$ is defined. In the semisimple case, choosing a non-degenerate pairing $\tr\colon \KK^\ell \to \KK, e_i\mapsto b_i$, this is equivalent to the matrix $\omega=(\omega_{ij})$ being symmetrisable and if $ \KK$ is algebraically closed, we can choose $c_{ij}=\sqrt{b_i/b_j}\omega_{ij}$ such that $c_{ij}=c_{ji}$. In this case, 
$U_n(c_{ij},c_{ji})=U_n(c_{ij})$ is just the ordinary Chebychev polynomial of the second kind. Then \Cref{cor:conjecture23} specifies to the statement that the Gram determinants are products of Chebychev polynomials of the second kind in $c_{ij}$ and degeneracy occurs if and only if there exists a pair $i,j$ such that $c_{ij}=q+q^{-1}$ for a root of unity $q$. This is the statement of \cite{KL}*{Conjecture~23}.
\end{remark}

For classical TL algebras, the Gram determinant for the pairings was computed in \cites{DFGG,DiFrancesco} and referred to as the \emph{Meander determinant}. A \emph{meander} is a configuration of a closed non-self-intersecting loop crossing an infinite line, see the left diagram in \Cref{fig:meanders}. There are $2n$ points where the crossing occurs.
\begin{figure}
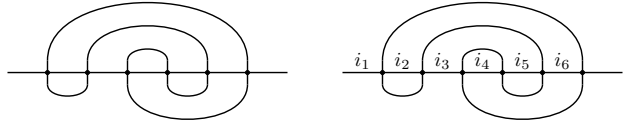

    \centering
    $\resizebox{!}{2.5\baselineskip}{$\vcenter{\hbox{\Meander}}$} \qquad  \resizebox{!}{2.5\baselineskip}{$\vcenter{\hbox{\MeanderLabelled}}$}$
    \caption{A meander on the left, and the same meander with coloured regions on the right.}
    \label{fig:meanders}
\end{figure}
Each meander can be obtained by pairing the diagram contained in the upper and lower half of the infinite line. Thus, the trace 
$\tr\colon \TL_n(d)\to \KK$, in particular, evaluates each meander diagram $m$ to $d^{|m|}$, where $|m|$ is the number of connected components.

The more general framework of crossingless matchings with labelled regions studied in this article allows to consider meanders with coloured regions as illustrated in the right diagram of \Cref{fig:meanders}.  
The results of this section can be interpreted as computing a coloured version of the meander determinant. 

\section{Tensor product decompositions}
\label{sec:tensorproduct}

In this section we assume that the two-variable Chebychev polynomials do not vanish on pairs $(\omega_{ij},\omega_{ji})$, as in \Cref{assumption:chebychev-non-zero}, so $\cC=\TL(\KK^\ell,\omega)$ is semisimple and every colour sequence $\bi$ has an associated simple object $T_\bi$. In addition, assume that the field $\KK$ is algebraically closed.

 To compute tensor product decompositions for the simple objects $T_\bi$ from \Cref{def:T-bi}, assume that $\bi=(i_1,\ldots, i_{n+1})$ and $\bj=(j_1,\ldots, j_{m+1})$ are sequences of indexes. For the tensor product $T_\bi\otimes T_\bj$ to be non-zero we require $i_{n+1}=j_1$. If, moreover, $i_n\neq j_2$ then any cap or cup will annihilate this tensor product and so $f_\bi \otimes f_\bj  = f_{\bi\bj}$ by the uniqueness of the JW projector \Cref{lem: JW is unique}, where $\bi\bj$ is the concatenation from \Cref{def:concat}. Therefore in this case $T_\bi \otimes T_\bj = T_{\bi\bj}$. We now consider the case when $\bi$ and $\bj$ have more then just one colour in common.
\begin{proposition}
    For colour sequences $\bi$ and $\bj$, let $r$ be the number of colours at the start of $\bj$ that match with colours at the end of $\bi$, so $j_1 = i_{n+1}$, $j_2 = i_n$, $\dots$, $j_r = i_{n+2-r}$ and $j_{r+1}\neq i_{n+1-r}$. Then $\End_\cC(T_\bi \otimes T_\bj)$ is an $r$-dimensional algebra with basis 
    \begin{equation}
    b_k = \resizebox{0.3\textwidth}{!}{$\vcenter{\hbox{\BkBasis}}$}, \qquad \text{for } k = 0, \dots, r-1.
    \end{equation}
    \begin{proof}
For any diagram $g$ of crossingless matchings with colour sequence $\bi\bj$ at the top and bottom, we consider the restriction 
$g'=(f_\bi\otimes f_\bj)g(f_\bi\otimes f_\bj)\in \End_\cC(T_\bi \otimes T_\bj).$
Using the annihilating property for $f_\bi$ and $f_\bj$, the only possible non-vanishing elements $g'$ are those where $g$ is one of the following diagrams, with the arcs going between $f_\bi$ and $f_\bj$:
        \begin{equation}\label{diagram: central arc diagrams}
              g_0 = \vcenter{\hbox{\CentralArcsA}},\; g_1 = \vcenter{\hbox{\CentralArcsB}},\; g_2 = \vcenter{\hbox{\CentralArcsC}}, \ldots .
        \end{equation}
Any other crossingless matching diagram $g$ connects at least two strands in $\bi$ or in $\bj$ at the top or the bottom and hence its restriction $g'$ is zero. The corresponding restrictions $g_i'$, where $g_i$ is any of the diagrams from \eqref{diagram: central arc diagrams}, span $\End_\cC(T_\bi \otimes T_\bj)$. By assumption $j_{r+1}\neq i_{n+1-r}$, while $j_k=i_{n+2-k}$, for all $k=1,\ldots, r$, so only the restrictions $g'_0,\ldots, g'_{r-1}$ remain nonzero and $g_k'=b_k$ for all $k=0,\ldots, r-1$.

These elements $b_0,\ldots, b_{r-1}$ are linearly independent as they are contained in different parts of the filtration, with $b_0$ being in $\cC_{n+m}\setminus \cC_{n+m-1}$, $b_1$ in $\cC_{n+m-2}\setminus \cC_{n+m-3}$, etc, and $b_{r-1}$ in $\cC_{n+m-2r}\setminus \cC_{n+m-2r-1}$. 
    \end{proof}
\end{proposition}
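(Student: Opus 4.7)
The plan is to realize $\End_\cC(T_\bi \otimes T_\bj)$ as the subspace of $\End_\cC([\bi\bj])$ spanned by restrictions $(f_\bi \otimes f_\bj)\, g\, (f_\bi \otimes f_\bj)$ where $g$ ranges over crossingless matching diagrams with colour sequence $\bi\bj$ at top and bottom, and then to use the annihilating property of the JW projectors from \Cref{prop: new jws work} to kill off nearly all such $g$. By planarity, any arc of $g$ which does not cross the boundary between the $\bi$-block and the $\bj$-block is contained entirely within one block. A diagram with such an internal arc on the $\bi$-side (at top or bottom) attaches a cup or cap directly to $f_\bi$ and is therefore annihilated, and similarly on the $\bj$-side. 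The diagrams that survive are precisely the central-arc configurations $g_k$ displayed in the statement, parametrised by the number $k$ of arcs crossing the $\bi/\bj$ boundary on each side.

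Next I would impose the colour constraint. The $k$-th innermost arc at the top joins a strand coloured $i_{n+1-k}$ to one coloured $j_{k+1}$, which forces the region between consecutive such arcs to be labelled by the product $e_{i_{n+1-k}} e_{j_{k+1}}$. Since the idempotents $e_s$ in $\KK^\ell$ are pairwise orthogonal, this product vanishes unless $i_{n+1-k} = j_{k+1}$. By the definition of $r$, this equality holds for $k = 0, \dots, r-1$ and fails at $k = r$, so $g_0,\dots,g_{r-1}$ restrict to non-zero elements $b_0,\dots,b_{r-1}$, while $g_r, g_{r+1}, \dots$ restrict to zero. Thus $\{b_0,\dots,b_{r-1}\}$ spans $\End_\cC(T_\bi \otimes T_\bj)$.

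Finally, I would establish linear independence by exploiting the filtration $\cC_0 \subseteq \cC_1 \subseteq \cdots$ constructed earlier. The diagram $g_k$ factors through the object $[\bi\cap^k\bj]$, a colour sequence of length $n+m+1-2k$, so $b_k \in \cC_{n+m-2k}$; on the other hand, the identity summand on the reduced middle strands cannot factor through a shorter sequence, placing $b_k \notin \cC_{n+m-2k-1}$. Since each $b_k$ therefore lies in a distinct layer of the filtration, a non-trivial linear relation among them would produce a contradiction after projecting to the appropriate subquotient, so they are linearly independent.

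The main obstacle I expect is the combinatorial step that classifies the surviving diagrams as precisely the $g_k$: one must ensure that, after ruling out internal arcs on either the $\bi$-side or the $\bj$-side, the remaining crossingless matchings are forced by planarity to have the specific nested structure depicted, with the same number of arcs on top as on bottom matched in the obvious way. The colour-compatibility argument and the filtration argument are then comparatively routine applications of the earlier results on JW projectors and on the bijection between indecomposables and colour sequences.
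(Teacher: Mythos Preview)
Your proposal is correct and follows essentially the same approach as the paper: you sandwich diagrams between $f_\bi \otimes f_\bj$, use the annihilating property of the JW projectors to reduce to the nested central-arc diagrams $g_k$, impose the colour-matching condition to cut off at $k=r-1$, and then prove linear independence via the filtration layers $\cC_{n+m-2k}\setminus\cC_{n+m-2k-1}$. The only difference is that you spell out the planarity and colour-matching steps in slightly more detail than the paper does.
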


\begin{proposition}\label[proposition]{prop:basis2}
    The following elements of $\End_\cC(T_\bi \otimes T_\bj)$ also form a basis.
    \begin{equation}
    a_k = \resizebox{0.35\textwidth}{!}{$\vcenter{\hbox{\AkBasis}}$}, \quad \text{for }k = 0,\dots, r-1.
    \end{equation}
    where $\bi \cap^k\bj = (i_1, \dots, i_{n+1-k}, j_{k+2}, \dots, j_{m+1}) = (i_1, \dots, i_{n-k}, j_{k+1}, \dots, j_{m+1})$ is the subsequence of $\bi\bj$ with the relevant overlapping parts removed. 
    
    This basis consists of pairwise orthogonal quasi-idempotents in the sense that $a_ka_l=0$ for $k\neq l$ and $a_k^2=\lambda a_k$, for $\lambda_k\in \KK\setminus\{0\}$. In particular, $\End_\cC(T_\bi \otimes T_\bj)$ is a commutative algebra.
\end{proposition}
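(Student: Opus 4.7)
The plan is to verify each claim (basis property, orthogonality, quasi-idempotence) in turn, then deduce commutativity as an immediate consequence. First, $a_k$ lies in $\End_\cC(T_\bi \otimes T_\bj)$ because the outer $f_\bi \otimes f_\bj$ brackets are idempotent. To show $\{a_k\}_{k=0}^{r-1}$ is a basis, I expand the central projector $f_{\bi \cap^k \bj}$ in $a_k$. By the leading-term property (an immediate extension of \Cref{lem: coeff of identity diagram is 1} to arbitrary colour sequences via the tensor-product \Cref{def:f-bi}), we have $f_{\bi \cap^k \bj} = 1_{\bi \cap^k \bj} + (\text{diagrams in } \cC_{n+m-2k-2})$. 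Substituting yields $a_k = b_k + (\text{terms in } \cC_{n+m-2k-2})$, and by the filtration argument in the proof of the preceding proposition, these correction terms are linear combinations of $b_{k+1}, \dots, b_{r-1}$. Hence the change-of-basis matrix from $\{a_k\}$ to $\{b_k\}$ is unipotent upper triangular, so $\{a_k\}$ is indeed a basis.

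For the orthogonality $a_k a_l = 0$ with $k < l$, consider the middle section of the stacked diagram. After using $(f_\bi \otimes f_\bj)^2 = f_\bi \otimes f_\bj$ to collapse the two central copies, the middle contains $f_{\bi \cap^k \bj}$ on top, then $k$ nested arcs at the $\bi$--$\bj$ interface, then a single $f_\bi \otimes f_\bj$, then $l$ nested arcs at the same interface, then $f_{\bi \cap^l \bj}$ at the bottom. Because $l > k$, the $l-k$ outermost arcs on the lower side extend strictly further than the arcs on the upper side. After using the annihilating property of $f_\bi$ and $f_\bj$ to absorb the compatible inner arcs (each pairing contributing a scalar via the wrapping map $\omega$), the surplus outer arcs manifest as cups or caps on two adjacent strands of $f_{\bi \cap^l \bj}$, which is annihilated by \Cref{lemma:test}.

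For the quasi-idempotence $a_k^2 = \lambda_k a_k$, the middle section of $a_k a_k$ has matching $k$ nested arcs on each side of the middle $f_\bi \otimes f_\bj$, so all arcs pair up compatibly and collapse into closed loops in coloured regions. Each such loop contributes a scalar via the wrapping map $\omega$, and together with the reductions of the JW projectors (cf.\ \Cref{lem:q-dim} and the recursions in \Cref{def:two-var-Cheb}), the constant $\lambda_k$ is a non-zero product of ratios of two-variable Chebychev polynomials under \Cref{assumption:chebychev-non-zero}. Combining pairwise orthogonality with quasi-idempotence, $\End_\cC(T_\bi \otimes T_\bj) \cong \KK^r$ as a commutative $\KK$-algebra. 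The main obstacle is the orthogonality step: explicitly tracking how the nested arc structures on two sides of the middle $f_\bi \otimes f_\bj$ interact so that the $l-k$ surplus arcs produce an annihilating cup or cap on the appropriate central JW projector.
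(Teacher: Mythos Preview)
Your basis argument is correct and matches the paper's: the change of basis from $\{a_k\}$ to $\{b_k\}$ is unitriangular with respect to the filtration.

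The orthogonality argument, however, has concrete problems. First, the phrase ``using the annihilating property of $f_\bi$ and $f_\bj$ to absorb the compatible inner arcs (each pairing contributing a scalar via the wrapping map $\omega$)'' does not make sense as written: the annihilating property of \Cref{lemma:test} says that a cap on a JW projector gives \emph{zero}, not a scalar, and the nested arcs at the $\bi$--$\bj$ interface connect a strand of $f_\bi$ to a strand of $f_\bj$, so neither projector individually sees a cap. Second, you locate the annihilation at $f_{\bi\cap^l\bj}$, but the annihilation actually occurs at $f_{\bi\cap^k\bj}$, the projector with \emph{more} strands. The paper's argument avoids this diagram-chasing entirely: since $a_l$ factors through $[\bi\cap^l\bj]$, which has $n+m-2l<n+m-2k$ strands, the intermediate morphism from $[\bi\cap^l\bj]$ to $[\bi\cap^k\bj]$ sitting between the two central projectors must contain a cap at its $(n+m-2k)$-strand end, and this cap annihilates $f_{\bi\cap^k\bj}$. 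This is both cleaner and symmetric, immediately giving commutativity.

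Your quasi-idempotence argument is also incomplete. The matching arcs on either side of the central $f_\bi\otimes f_\bj$ do not simply ``collapse into closed loops'', because $f_\bi\otimes f_\bj$ is a sum of many diagrams, not the identity; there is no direct reason the result is a scalar multiple of the same diagram, and the appeal to \Cref{lem:q-dim} does not compute the relevant composition. The paper instead argues indirectly: once orthogonality gives commutativity, semisimplicity of $\End_\cC(T_\bi\otimes T_\bj)$ and \Cref{lem: no central nilpotent in semisimple algebra} force $a_s^2\neq 0$ for every $s$. Writing $a_k^2=\sum_i\alpha_ia_i$ and multiplying by $a_s$ for $s\neq k$ gives $0=\alpha_sa_s^2$, hence $\alpha_s=0$; thus $a_k^2=\lambda_ka_k$ with $\lambda_k\neq 0$. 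This avoids computing $\lambda_k$ explicitly, which your approach would need to do and does not.
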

    \begin{proof}
          To show that the $a_k$ form a basis, we will show that each of the $b_k$ can be written as a linear combination of the $a_k$. Therefore the $a_k$ must span $\End_\cC(T_\bi \otimes T_\bj)$, and there are $r$ of them so they must be a basis.

We expand  $f_{\bi \cap^{r-1} \bj}$ as $1_{\bi \cap^{r-1} \bj}$ plus terms of lower order in the filtration. We know that $a_k\in \cC_{n-2k}\setminus \cC_{n-2k-1}$. Hence, $a_k=b_k + \lambda_{k,1}b_{k+1}+\ldots$ for some $\lambda_{k,i}\in \KK$. The coefficients of any $b_l$ with $l<k$ are zero and the leading coefficient of $b_k$ equals $1$ in this expansion.  Hence, the $a_k$ also form a basis.     

The $a_k$ are also orthogonal with respect to multiplication. If we take $a_sa_t$ for some $s<t$, we see that $a_t$ factors through $[n+m-2t]$ and $f_{\bi\cap^s\bj}$ is a JW projector with $n+m-2s$ strands. Since $s<t$ this implies that $a_sa_t=0$. This works similarly for $s>t$. In particular, each basis element commutes with all other basis elements and  $\End_{\cC}(T_\bi)$ is a commutative algebra.

We also see that $a_k^2 = \sum_{i=0}^{r-1}\alpha_i a_i$ for some $\alpha_i\in\KK$, and for any $s\neq k$ we have $0 = a_s a_k^2 = \alpha_s a_s^2$. By \Cref{lem: no central nilpotent in semisimple algebra} we must have $a_s^2 \neq 0$, using that $\End_\cC(T_\bi \otimes T_\bj)$ is semisimple. Therefore we must have $\alpha_s = 0$ for all $s\neq k$, and so $a_k^2 = \lambda_k a_k$. Again $a_k^2\neq 0$ so $\lambda_k \neq 0$.
\end{proof}

We will normalise the $a_k$ to form a basis of orthogonal idempotents in the following. This is possible since $\KK$ is algebraically closed. We can now give a tensor product decomposition of $T_\bi \otimes T_\bj$.

\begin{theorem}\label{thm:tensor-dec}
    For colour sequences $\bi = (i_1, \dots, i_{n+1})$ and $\bj = (j_1, \dots, j_{m+1})$, let $r$ be the number of colours at the start of $\bj$ that match with colours at the end of $\bi$, so $j_1 = i_{n+1}$, $j_2 = i_n$, $\dots$, $j_r = i_{n+2-r}$ and $j_{r+1}\neq i_{n+1-r}$. If $r=0$ then $T_\bi\otimes T_\bj \cong 0$, and if $r>0$, then $\End_\cC(T_\bi\otimes T_\bj) \cong \KK^{r}$ and 
    \begin{equation}\label{eq:tensor-decomp}
    T_\bi\otimes T_\bj = \bigoplus_{k = 0}^{r-1} T_{\bi \cap^k \bj}.
    \end{equation}
    \begin{proof}
        If $r=0$ this means $j_1\neq i_{n+1}$, but these colours will meet and annihilate in $1_\bi \otimes 1_\bj = 1_{T_\bi\otimes T_\bj}$ so $1_{T_\bi\otimes T_\bj}=0$ and $T_\bi\otimes T_\bj \cong 0$.

        If $r>0$, each $a_k$ is an idempotent of $T_\bi \otimes T_\bj$ and factors through the simple object $T_{\bi \cap^k \bj}$, so this must be a direct summand $T_{\bi \cap^k \bj}\leq_\oplus T_\bi \otimes T_\bj$.  Consider the element $x = \sum_{k = 0}^{r-1} a_k \in \End_\cC(T_\bi \otimes T_\bj)$. For any basis idempotent $a_k$ we have $xa_k = a_kx = a_k$, and since the unit of an algebra must be unique we have $x = 1_{\End_\cC(T_\bi \otimes T_\bj)} = 1_{T_\bi \otimes T_\bj}$. This proves \eqref{eq:tensor-decomp}.
        
        If $s\neq k$ then $f_{\bi\cap^s\bj}$ and $f_{\bi\cap^k\bj}$ will have a different number of strands so any morphism between $T_{\bi\cap^s\bj}$ and $T_{\bi\cap^k\bj}$ must contain caps or cups and will therefore annihilate. This means $\Hom_\cC(T_{\bi\cap^s\bj}, T_{\bi\cap^k\bj}) = 0$ and, using the fact that each $T_{\bi\cap^k \bj}$ is simple, we see that $$\End_\cC(T_\bi\otimes T_\bj) \cong \prod_{k=0}^{r-1}\End_\cC(T_{\bi\cap^k \bj}) = \KK^{r}.$$
        This completes the proof of the theorem.
        \end{proof}
\end{theorem}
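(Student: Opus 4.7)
The plan is to leverage the two bases of $\End_\cC(T_\bi \otimes T_\bj)$ already established in the preceding propositions and exploit the semisimplicity of $\cC$ guaranteed by \Cref{assumption:chebychev-non-zero}. The case $r=0$ is immediate: if $i_{n+1}\neq j_1$, then the tensor product $1_\bi\otimes 1_\bj$ already pairs two orthogonal idempotents along the shared boundary region, so $1_{T_\bi\otimes T_\bj}=0$ and hence $T_\bi\otimes T_\bj\cong 0$. For $r>0$, the strategy is to interpret the quasi-idempotent basis $\{a_k\}_{k=0}^{r-1}$ of \Cref{prop:basis2} as an explicit complete system of orthogonal idempotents whose images are the summands in \eqref{eq:tensor-decomp}.

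First I would normalise each $a_k$ by replacing it with $\lambda_k^{-1}a_k$, which is legitimate since $\lambda_k\neq 0$ and $\KK$ is algebraically closed; this makes $\{a_k\}$ into a family of pairwise orthogonal idempotents in $\End_\cC(T_\bi\otimes T_\bj)$. Each (normalised) $a_k$ manifestly factors through $T_{\bi\cap^k\bj}$ by its diagrammatic definition as a composition through $f_{\bi\cap^k\bj}$, and $T_{\bi\cap^k\bj}$ is simple by \Cref{thm: TL is semisimple}. Hence each $a_k$ cuts out a direct summand of $T_\bi\otimes T_\bj$ isomorphic to $T_{\bi\cap^k\bj}$.

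Next I would verify that these summands exhaust $T_\bi\otimes T_\bj$ by showing $\sum_{k=0}^{r-1}a_k = 1_{T_\bi\otimes T_\bj}$. Setting $x:=\sum_{k=0}^{r-1}a_k$, orthogonality gives $xa_k=a_k=a_kx$ for every $k$, so $x$ is a unit for the $\KK$-span of $\{a_k\}$; since $\{a_k\}$ is already a basis of $\End_\cC(T_\bi\otimes T_\bj)$ by \Cref{prop:basis2}, the algebra identity $1_{T_\bi\otimes T_\bj}$ lies in this span, and the uniqueness of the unit forces $x=1_{T_\bi\otimes T_\bj}$. This yields the decomposition \eqref{eq:tensor-decomp}.

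Finally, for the endomorphism algebra statement, I would observe that for $s\neq k$ the sequences $\bi\cap^s\bj$ and $\bi\cap^k\bj$ have different lengths, so any morphism between the simple objects $T_{\bi\cap^s\bj}$ and $T_{\bi\cap^k\bj}$ must involve caps or cups that annihilate against the JW projectors; hence $\Hom_\cC(T_{\bi\cap^s\bj}, T_{\bi\cap^k\bj})=0$. Combined with $\End_\cC(T_{\bi\cap^k\bj})\cong\KK$ from \Cref{thm: TL is semisimple}, this yields $\End_\cC(T_\bi\otimes T_\bj)\cong\prod_{k=0}^{r-1}\KK=\KK^r$. The main technical obstacle is the normalisation and orthogonality step: one must be sure that the $a_k$, which were only shown to be \emph{quasi}-idempotents with $a_k^2=\lambda_k a_k$ and $\lambda_k\neq 0$, genuinely rescale to a complete orthogonal family — this relies crucially on the non-degeneracy of the pairings (\Cref{cor:conjecture23}) and the fact that the $T_{\bi\cap^k\bj}$ sit in distinct levels of the filtration, preventing any hidden cross-terms.
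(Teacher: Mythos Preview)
Your proposal is correct and follows essentially the same route as the paper's proof: normalise the quasi-idempotents $a_k$ of \Cref{prop:basis2} to genuine orthogonal idempotents, observe that each factors through the simple $T_{\bi\cap^k\bj}$, show $\sum_k a_k$ is the identity by uniqueness of the unit, and deduce $\End_\cC(T_\bi\otimes T_\bj)\cong\KK^r$ from the vanishing of cross-homs between summands of different strand count. One small overreach: you do not need \Cref{cor:conjecture23} for the normalisation step, since $\lambda_k\neq 0$ was already established in \Cref{prop:basis2} via \Cref{lem: no central nilpotent in semisimple algebra}, and rescaling by $\lambda_k^{-1}$ requires only that $\lambda_k$ be invertible, not that $\KK$ be algebraically closed.
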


We observe that in the direct sum decomposition of the tensor product in \eqref{eq:tensor-decomp}, each simple summand will have multiplicity no more than $1$.
Special cases of the tensor product decompositions were already given in \cite{EL}*{Corollary 2.18}.

Under \Cref{assumption:chebychev-non-zero}, $\cC=\TL(\KK^\ell,\omega)$ is a semisimple \emph{multitensor category} in the terminology of \cite{EGNO} as it is a hom-finite semisimple monoidal category which is rigid, i.e., has left and right duals. The tensor unit decomposition from \eqref{eq:unit-dec} can be used to decompose 
$$\cC=\textstyle \bigoplus_{i,j=1}^\ell \cC_{ij},$$
where $\cC_{ij}$ is the full subcategory consisting of direct sums of objects $T_{\bi}$ where the colour sequence $\bi$ starts with the colour $i$ and ends with the colour $j$. The $\cC_{ii}$ are tensor categories, i.e., have a simple tensor unit $\one_i=[(i)]$.

\begin{example}
    If $\ell=1$, denote by $T_{n}$ the simple object corresponding to the sequence $(1,\ldots, 1)$ of length $n+1$. Then $T_n\otimes T_m=\bigoplus_{r=0}^{\lfloor n+m \rfloor} T_{n+m-2r}$ \Cref{thm:tensor-dec}, which recovers the well-known tensor product decomposition in the semisimple case of the classical TL category, also known as the Clebsch--Gordan rule.
\end{example}

\appendix

\section{Chebychev Polynomials}\label[appendix]{app: chebychev}

A key tool used in \Cref{sec:semisimple} are the two-variable Chebychev polynomials $U_n(x,y)$ defined in \Cref{def:two-var-Cheb}. These polynomials were already studied in \cite{Haz}*{Section~3}, where $[n+1]_s = U_n(x_t, x_s)$. In the following, we prove some properties of these two-variable Chebychev polynomials. These are not used in the main text but might be of independent interest. 

For simplicity we just write $U_n=U_n(x,y)$, and $\overline{U_n}=U_n(y,x)$.

\begin{lemma}\label[lemma]{lem: Chebychev expanded formula}
    Let $1<p<m$, then $$U_m = \begin{cases}
        \overline{U_p}U_{m-p} - \overline{U_{p-1}}U_{m-p-1}, &\text{if $m$ is even}, \\
        {U_p}U_{m-p} - {U_{p-1}}U_{m-p-1}, &\text{if $m$ is odd}.
    \end{cases}$$
    \begin{proof}
    When $p=1$, these formulas become the recursion formulas that define the Chebychev polynomials. For $p>1$, we use induction on $p$. Here we will give the proof for the case where $m$ is even and $p+1$ is odd, but the other cases use the same method. Assume for induction that $U_m = \overline{U_p}U_{m-p} - \overline{U_{p-1}}U_{m-p-1}$ for some even $p$. Then using that $m-p$ is even and that $p+1$ is odd, we get
    \begin{align*}
        U_m &= \overline{U_p}(xU_{m-p-1} - U_{m-p-2}) - \overline{U_{p-1}}U_{m-p-1}\\
        &= (x\overline{U_p} - \overline{U_{p-1}})U_{m-p-1} - \overline{U_p}U_{m-p-2}\\
        &=\overline{U_{p+1}}U_{m-(p+1)} - \overline{U_{(p+1)-1}}U_{m-(p+1)-1}
    \end{align*}
By induction, the statement follows. 
    \end{proof}
\end{lemma}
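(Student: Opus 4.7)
The plan is to proceed by induction on $p$, mirroring the structure of the defining recursion of the $U_n$. Although the statement is only claimed for $p>1$, the formula also holds for $p=1$: substituting $p=1$ yields exactly the defining recursion of \Cref{def:two-var-Cheb} (using $\overline{U_1}=x$, $\overline{U_0}=U_0=1$, $U_1=y$), so $p=1$ provides a convenient base case.

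For the inductive step I would fix the parity of $m$ and then split according to the parity of $p$. Consider first $m$ even. Assume the formula $U_m = \overline{U_p}U_{m-p} - \overline{U_{p-1}}U_{m-p-1}$ holds for some $p$ with $1 \le p < m-1$. Since $m$ is even, $m-p$ has the same parity as $p$, so the defining recursion for $U_{m-p}$ uses the coefficient $x$ when $p$ is even and $y$ when $p$ is odd. In both sub-cases, substitute this recursion into the inductive hypothesis and regroup as
\begin{equation*}
U_m = (z\,\overline{U_p} - \overline{U_{p-1}})\,U_{m-p-1} - \overline{U_p}\,U_{m-p-2},
\end{equation*}
where $z\in\{x,y\}$ is the appropriate variable. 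The parity of $p+1$ is opposite to that of $p$, and the defining recursion for $\overline{U_n}$ (obtained by swapping $x$ and $y$ in the recursion for $U_n$) uses coefficient $x$ for odd $n$ and $y$ for even $n$. A direct check shows this is precisely the coefficient $z$ appearing above, so $z\,\overline{U_p} - \overline{U_{p-1}} = \overline{U_{p+1}}$, yielding the formula at $p+1$. The case $m$ odd is handled identically, except that the roles of $U_p$ and $\overline{U_p}$ are swapped throughout (no bars appear).

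I expect the main obstacle, such as it is, to be purely bookkeeping: one must keep careful track of the four parity configurations (the parity of $m$ combined with the parity of $p$) and verify in each that the coefficient produced by expanding $U_{m-p}$ agrees with the coefficient in the recursion for $\overline{U_{p+1}}$ (or $U_{p+1}$). This matching is the whole content of the lemma and reflects the fact that the two-variable Chebychev polynomials $U_n$ and $\overline{U_n}$ satisfy recursions whose coefficients alternate in exactly complementary ways. Once this compatibility is observed in one representative case (for instance the one already worked out in the excerpt), the remaining cases follow by the same manipulation.
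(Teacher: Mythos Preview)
Your proposal is correct and follows essentially the same approach as the paper: induction on $p$ with base case $p=1$, expanding $U_{m-p}$ via its defining recursion, regrouping, and recognising the recursion for $\overline{U_{p+1}}$ (respectively $U_{p+1}$). The paper explicitly works out only the single case $m$ even and $p$ even, leaving the other parity configurations to the reader, whereas you unify the sub-cases with the placeholder coefficient $z$; this is a cosmetic difference, not a different strategy.
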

\begin{lemma}\label[lemma]{lem: chebychev overline formula}
    Let $m\geq 0$, then $$\overline{U_m} = \begin{cases}
        U_m, &\text{if $m$ is even}, \\
        \frac{x}{y}{U_m}, &\text{if $m$ is odd}.
    \end{cases}$$
    \begin{proof}
        First let $m$ be even, then we apply \Cref{lem: Chebychev expanded formula} with $p = m-1$ to get $U_m = \overline{U_{m-1}}U_{1} - \overline{U_{m-2}}U_{0} = y\overline{U_{m-1}} - \overline{U_{m-2}}$ which is the formula for $\overline{U_m}$ when $m$ is even. 
        
        Let $m$ be odd, so $\overline{U_m} = x\overline{U_{m-1}} - \overline{U_{m-2}}$, and assume for induction that $\overline{U_{m-2}} = \frac{x}{y}U_{m-2}$. Then $\overline{U_m} = x{U_{m-1}} - \frac{x}{y}{U_{m-2}} = \frac{x}{y}(yU_{m-1} - U_{m-2}) = \frac{x}{y}U_m$, and since this is clearly true for the first few $U_m$ must hold for all $m$.
    \end{proof}
\end{lemma}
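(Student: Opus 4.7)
The plan is to prove the formula by strong induction on $m\geq 0$, using the defining recursion from \Cref{def:two-var-Cheb} together with the symmetric recursion for $\overline{U_m}=U_m(y,x)$ obtained by swapping $x$ and $y$. Explicitly, the latter reads $\overline{U_m}=y\,\overline{U_{m-1}}-\overline{U_{m-2}}$ when $m$ is even and $\overline{U_m}=x\,\overline{U_{m-1}}-\overline{U_{m-2}}$ when $m$ is odd, since the parity of the index is preserved by the substitution but the role of the leading coefficient flips.

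For the base cases one checks directly that $\overline{U_0}=1=U_0$ and $\overline{U_1}=x=\tfrac{x}{y}\cdot y=\tfrac{x}{y}U_1$, matching the two cases of the claim. For the inductive step, split on the parity of $m$. If $m$ is even, then $m-1$ is odd and $m-2$ is even, and by the inductive hypothesis $\overline{U_{m-1}}=\tfrac{x}{y}U_{m-1}$ while $\overline{U_{m-2}}=U_{m-2}$; substituting these into the recursion for $\overline{U_m}$ yields
\[
\overline{U_m}=y\cdot\tfrac{x}{y}U_{m-1}-U_{m-2}=xU_{m-1}-U_{m-2}=U_m,
\]
which is the even-$m$ case. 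If $m$ is odd, then $m-1$ is even and $m-2$ is odd, so by the inductive hypothesis $\overline{U_{m-1}}=U_{m-1}$ and $\overline{U_{m-2}}=\tfrac{x}{y}U_{m-2}$, and hence
\[
\overline{U_m}=x\,U_{m-1}-\tfrac{x}{y}U_{m-2}=\tfrac{x}{y}\bigl(yU_{m-1}-U_{m-2}\bigr)=\tfrac{x}{y}U_m,
\]
as required.

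The only subtle point is the parity bookkeeping: each inductive step simultaneously uses the previous even-indexed and odd-indexed cases, so strong induction (or a simultaneous induction over the two parities) is the natural set-up. An alternative route would be to specialise \Cref{lem: Chebychev expanded formula} at $p=m-1$, which immediately produces the recursion that characterises $\overline{U_m}$ in the even case (and a symmetric manipulation handles the odd case); but the direct inductive argument above is self-contained and requires no auxiliary lemma.
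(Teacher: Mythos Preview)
Your proof is correct. It differs from the paper's only in the even case: the paper invokes \Cref{lem: Chebychev expanded formula} with $p=m-1$ to obtain $U_m=y\,\overline{U_{m-1}}-\overline{U_{m-2}}$ directly, then matches this against the swapped recursion for $\overline{U_m}$ to conclude $U_m=\overline{U_m}$ without induction, whereas you handle both parities uniformly by strong induction using only the defining recursion. The odd-case arguments are essentially identical. Your route is more self-contained (no auxiliary lemma needed), while the paper's even-case argument is non-inductive once \Cref{lem: Chebychev expanded formula} is available; you already note this alternative in your final paragraph.
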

The final proposition shows an equivalent condition for divisibility of the two-variable Chebychev polynomials. 

\begin{proposition}\label[proposition]{prop: chebychev divisibility}
    For $m,n \geq 0$, $U_m ~|~ U_n$ if and only if $m+1 ~|~ n+1$.
    \begin{proof}
    We know from \cite{Haz}*{Section 3} that there exist distinct irreducible polynomials $\Theta_{n,s} \in \ZZ[x,y]$ for $n\geq 1$, such that $U_n(x,y) = [n+1]_s = \prod_{k|n+1} \Theta_{k,s}$. Therefore if $U_m ~|~ U_n$ then  $\prod_{k|m+1} \Theta_{k,s}~|~\prod_{l|n+1} \Theta_{l,s}$, and in particular $\Theta_{m+1,s}~|~\prod_{l|n+1} \Theta_{l,s}$. Therefore $\Theta_{m+1,s}= \Theta_{l,s}$ for some $l|n+1$, so $m+1~|~n+1$.
    
    Conversely if $m+1~|~n+1$, then for any $k~|~m+1$ we have $k~|~n+1$ so $\Theta_{k,s}~|~\prod_{l|n+1} \Theta_{l,s}$. Then since all the $\Theta_{n,s}$ are distinct and irreducible we have $\prod_{k|m+1} \Theta_{k,s}~|~\prod_{l|n+1} \Theta_{l,s}$ and $U_m ~|~ U_n$.
    \end{proof}
\end{proposition}


\bibliography{biblio}
\bibliographystyle{amsrefs}

\end{document}